\documentclass[11pt]{amsart}
\usepackage[latin9]{inputenc}
\usepackage{units}
\usepackage{url}
\usepackage{amsmath}
\usepackage{amsthm}
\usepackage{amssymb}
\usepackage[english]{babel}
\usepackage{verbatim}
\usepackage{ulem}
\usepackage{color}
\usepackage{dsfont}
\usepackage{mathtools}

\makeatother

\usepackage{babel}

\makeatletter
%%%%%%%%%%%%%%%%%%%%%%%%%%%%%% Textclass specific LaTeX commands.
\newtheorem{thm}{Theorem}[section]
\newtheorem{lem}[thm]{Lemma}

\newtheorem{prop}[thm]{Proposition}

\newtheorem{cor}[thm]{Corollary}

%{\eta_1}
%{\eta_2}

%
%
%
%

%
%
%
%

%
%
%
%
\global\long\def\cE{\mathcal{E}}%
\global\long\def\cI{\mathcal{I}}%
\global\long\def\cZ{\mathcal{Z}}%

\newcommand{\Z}{{\mathbb Z}} %cph changed from \mathbf

\newcommand{\R}{{\mathbb R}}

\newcommand{\N}{{\mathbb N}}

\renewcommand{\^}{\widehat}

 % norm =(\frac{ n+\sqrt{n^2-4}} 2)^2
\newcommand{\intinf}{\int_{-\infty}^\infty}
 %  average

 % conductor

  % Legendre symbol

%\newcommand{\sumf}{{\sideset\and^{\flat} \to \sum}}

\newcommand{\conv}{*}

 % expectation
 %sign
 %sign

 \newcommand{\supp}{\operatorname{supp}}
 
\newcounter{ToDo}

\newcommand{\off}{\operatorname{off}}

\global\long\def\bx{\mathbf{x}}%

\numberwithin{equation}{section}

\begin{document}

\title[The pair correlation of small non-integer powers]{The metric theory of the pair correlation function for small non-integer powers}
\author{Ze\'ev Rudnick and Niclas Technau}

\date{\today}

\address{School of Mathematical Sciences, Tel Aviv University, Tel Aviv 69978, Israel}
 \email{rudnick@tauex.tau.ac.il}
 
\address{Department of Mathematics, University of Wisconsin, 480 Lincoln Drive, Madison, WI, 53706, USA }
  \email{technau@wisc.edu}
 
\thanks{ZR was supported by the European Research Council (ERC) under the European Union's Horizon 2020 research and innovation programme (Grant agreement No. 786758).
NT was supported by the Austrian Science Fund (FWF), project J-4464.}

\begin{abstract} 
 For $0<\theta<1$, we show that for almost all $\alpha$, the pair correlation function of the sequence of fractional parts of $\{\alpha n^\theta:n\geq 1 \}$ is Poissonian.
\end{abstract}
\maketitle

\section{Introduction}

 The theory of uniform distribution of sequences modulo one has a long history, with many developments since its creation over a century ago, see   \cite{KN}. Much more recent is the study of 
  ``local'' statistics  of sequences,   motivated by problems in quantum chaos and the theory of the Riemann zeta function, see e.g. \cite{ZRWhatisQC}. A key example of such a  local  statistic is the {\sl pair correlation function} $R_2(x)$  which, for a sequence of points $\{\theta_n\}\subset \R/\Z$, assigns to  every $s>0$ the limit
\begin{equation}\label{def pair cor} 
\lim_{N\to \infty} \frac 1N \#\{ 1\leq x\neq y\leq N: |\theta_x-\theta_y| \leq \frac{s}{N} \} = \int_{-s}^s R_2(t)dt
 \end{equation}
assuming that the limit exists, which is in itself a major problem. The pair correlation is analytically the most accessible example of a ``local'' statistic, the easiest to visualize being
the nearest neighbour spacing distribution $P(s)$, which is defined as the limit distribution (assuming it exists) of the gaps between neighbouring elements in the sequence, rescaled so as to have mean value unity. A simple model of what to expect is a ``random sequence'' (the Poisson model), say taking  independent uniform random variables. In this case  the nearest neighbour spacing distribution is exponential: $P(s) =\exp(-s)$ and  the pair correlation function is $R_2(t)\equiv 1$: 
\begin{equation}\label{poisson pair cor} 
\lim_{N\to \infty} \frac 1N \#\{ 1\leq x\neq y\leq N: |\theta_x-\theta_y|  \leq \frac{s}{N}   \} = \int_{-s}^s 1 dx =2s  .
 \end{equation}

One of the few cases where the pair correlation of a fixed (deterministic) sequence is known is the  sequence of square roots of integers $\theta_n=\sqrt{n}$.
     It was shown by Elkies and  McMullen \cite{EM} that the nearest neighbour spacing distribution of $\{\sqrt{n}\bmod 1\}$ exists, and is non-Poissonian (this was discovered numerically by Boshernitzan in 1993). In particular, the level spacing distribution is flat near the origin $P(s) = 6/\pi^2$, $0\leq s\leq 1/2$, and is piecewise analytic.
    Surprisingly,  El-Baz, Marklof and Vinogradov \cite{EMV} showed that the pair correlation of the fractional parts of $\{\sqrt{n}:n\neq k^2\}$ is nonetheless Poissonian, after removing perfect squares $n=k^2$ (without this removal the pair correlation blows up).  
   Their work has as its departure point the methods of \cite{EM}, which use homogeneous dynamics, and does not apply to the more general case of $\alpha\sqrt{n}$ when $\alpha^2$ is irrational.
 
Our goal in this paper is to show that the pair correlation of $\alpha \sqrt{n}$ is indeed Poissonian for {\sl  almost all} $\alpha$, in the sense that the exceptional sense has Lebesgue measure zero. In fact, we show more generally
\begin{thm}\label{main thm}
Fix $\theta\in (0,1)$. Then for almost all $\alpha$, the pair correlation function of the sequence $\alpha n^\theta$ is Poissonian. 
\end{thm}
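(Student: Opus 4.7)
The plan is to apply the variance method for metric pair-correlation results (used by Rudnick-Sarnak, Heath-Brown, and others): show that the relevant counting function has small $L^2$ variance in $\alpha$ over a bounded interval, then deduce a.e.\ convergence by Borel-Cantelli along a sparse subsequence of $N$. Concretely, fix a nonnegative even Schwartz function $\phi$ with $\phi\ge \mathbf{1}_{[-s,s]}$ and set
$$R_\phi(N,\alpha):=\frac{1}{N}\sum_{\substack{1\le m,n\le N\\ m\ne n}}\;\sum_{k\in\Z}\phi\bigl(N(\alpha(m^\theta-n^\theta)-k)\bigr).$$
Poisson summation converts this into
$$R_\phi(N,\alpha) = \frac{1}{N^2}\sum_{k\in\Z}\hat\phi(k/N)\sum_{\substack{m\ne n\\ m,n\le N}}e\bigl(k\alpha(m^\theta-n^\theta)\bigr),$$
whose $k=0$ term gives the Poissonian main value $\hat\phi(0)(1-1/N)\to\int\phi$. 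Sandwiching $\phi$ between majorant and minorant smoothings of $\mathbf{1}_{[-s,s]}$ reduces the theorem to proving $E_N(\alpha):=R_\phi(N,\alpha)-\hat\phi(0)\to 0$ for almost every $\alpha$.

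To bound $\int_I |E_N(\alpha)|^2\,d\alpha$ on a bounded interval $I$ of $\alpha$, expand the square and apply $\bigl|\int_I e(\alpha L)\,d\alpha\bigr|\ll \min(1,1/|L|)$ with
$$L:=k_1(m_1^\theta-n_1^\theta)-k_2(m_2^\theta-n_2^\theta).$$
Schwartz decay of $\hat\phi$ confines the effective range to $|k_j|\ll N^{1+\epsilon}$. The two diagonal families on which $L\equiv 0$, namely $(k_1,m_1,n_1)=(k_2,m_2,n_2)$ and $(k_1,m_1,n_1)=(-k_2,n_2,m_2)$, contribute $O(N^{-1})$ to the variance. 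The off-diagonal contribution, after dyadic decomposition over $|k_j|$, $|m_j-n_j|$, and $|L|\asymp \Delta$, reduces to a counting estimate for six-tuples $(k_1,k_2,m_1,n_1,m_2,n_2)$ with $|L|\le\Delta$. Exploiting the real-analyticity and strict concavity of $x\mapsto x^\theta$ (so that $n\mapsto(n+d)^\theta-n^\theta$ is strictly monotone in $n$ with controlled derivative), one fixes four of the variables and bounds the solutions in the remaining two via the mean-value or implicit-function theorem. In the delicate regime $|k_1|\asymp|k_2|$ with $|m_j-n_j|$ comparable, additive-energy-type bounds for the set $\{k(m^\theta-n^\theta):|k|\le K,\ m,n\le N\}$ are required. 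The target is $\int_I |E_N|^2\,d\alpha \ll N^{-\eta}$ for some $\eta=\eta(\theta)>0$.

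With such a power-saving variance bound in hand, Chebyshev's inequality and Borel-Cantelli along the sparse sequence $N_j=\lceil j^{2/\eta}\rceil$ yield $E_{N_j}(\alpha)\to 0$ for almost every $\alpha\in I$. To extend this to general $N$, for $N_j\le N<N_{j+1}$ one estimates $|R_\phi(N,\alpha)-R_\phi(N_j,\alpha)|$ directly: the difference involves only the extra pair terms with $\max(m,n)\in(N_j,N]$ together with the effect of the rescaling in $\phi$, giving a Lipschitz-type bound of order $O((N_{j+1}-N_j)/N_j)=o(1)$ as $j\to\infty$. Exhausting $\R$ by countably many bounded intervals completes the argument.

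The main obstacle is the counting estimate. Uniformly in $\theta\in(0,1)$ the linear form $L$ has no arithmetic structure, so one must rely entirely on the real-analytic properties of $x^\theta$. As $\theta\to 1^-$ the derivatives are nearly constant and the problem approaches the linear case; as $\theta\to 0^+$ the values $n^\theta$ cluster densely, so near-coincidences $|L|$ small become abundant and the counting bound degrades. A power saving uniform in $\theta$ requires a delicate multi-scale dyadic decomposition combined with analytic tools such as van der Corput or Weyl differencing to handle oscillatory sums where naive counting of near-solutions is insufficient.
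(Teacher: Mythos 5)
Your general framework (smoothing, Poisson summation in the periodization variable, $L^2$ variance in $\alpha$, Borel--Cantelli along a polynomial subsequence, interpolation to all $N$) matches the paper's skeleton, but your plan omits the single idea that makes the case $\theta<1$ tractable, and without it the counting step you describe cannot be carried out. After your first Poisson summation the variance is controlled by counting near-solutions of
\[
\bigl|k_1(m_1^\theta-n_1^\theta)-k_2(m_2^\theta-n_2^\theta)\bigr|\le\Delta,
\qquad k_j\ll N^{1+\epsilon},\ \ m_j,n_j\asymp N .
\]
This is exactly the counting problem of Aistleitner--El-Baz--Munsch, which they solve for $\theta>1$; for $\theta<1$ it is \emph{not} amenable to the same treatment (the paper cites \cite[Section 9]{AEM} for precisely this obstruction). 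The reason is quantitative, not merely technical: for $\theta<1$ the map $x\mapsto x^\theta$ contracts on dyadic blocks, so the numbers $m^\theta-n^\theta$ with $m,n\asymp N$, $m-n\asymp D$ cluster at scale $N^{\theta-1}D$, and the thresholds $\Delta$ that actually arise are comparable to this cluster scale. Near-solutions then become genuinely abundant, and the "fix four variables, solve for two by the implicit function theorem" strategy you propose, as well as generic van der Corput/Weyl differencing, does not produce a power saving uniform over the dyadic ranges. You in fact notice this difficulty yourself in your last paragraph, but you do not supply a mechanism to overcome it.

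What the paper does, and your proposal misses, is to apply a second Poisson summation together with stationary phase (the van der Corput B-process, Proposition~\ref{prop: pruning}) to the exponential sum
\[
\cE_{N,j}(\alpha)=\sum_{y}h\Bigl(\tfrac{y}{N}\Bigr)e(\alpha j y^\theta)
\]
\emph{before} forming the variance. This replaces $y^\theta$ (with exponent $<1$) by the dual phase $(\alpha j)^{\Theta}/m^{\Theta-1}$ with $\Theta=\tfrac1{1-\theta}>1$, and shortens the sum to $m\asymp j/N^{1-\theta}$. Only after this inversion is the variance reduced to counting near-solutions of
\[
\bigl| j_1^\Theta(m_1^{1-\Theta}-n_1^{1-\Theta})-j_2^\Theta(m_2^{1-\Theta}-n_2^{1-\Theta})\bigr|\le N^\epsilon,
\]
a Diophantine system with exponent $1-\Theta<0$ and much shorter ranges, which the paper handles by a twisted Dirichlet-polynomial mean value (Lemma~\ref{lem:D in terms of integral}) combined with the Robert--Sargos counting theorem (Theorem~\ref{thm: RS}). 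Two further consequential choices follow from the B-process that you also do not anticipate: the variance is taken against the measure $d\mu(\alpha)=\Theta\rho(\alpha^\Theta)\,d\alpha/\alpha$ rather than Lebesgue measure (so that the change of variables $\beta=\alpha^\Theta$ linearizes the $\alpha$-phase and gives superpolynomial decay of the off-diagonal oscillatory integrals, Lemma~\ref{lem:osc integral 1}), and the stationary-phase error term $O(N^{1-\theta/2}/j^{1/2})$ forces the additional truncation $j\gg N^{1-\epsilon}$ via a separate second-moment bound (Proposition~\ref{prop second moment of E} and Corollary~\ref{cor: truncate j less than Neps}). In short, the outer strategy you describe is right, but the proof stands or falls on the B-process transformation and the Dirichlet-polynomial/Robert--Sargos machinery, none of which appears in your plan.
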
 
The corresponding result for integer $\theta>1$ is due to Rudnick and Sarnak \cite{RudSarpair}. 
The case of non-integer $\theta>1$ was recently solved by Aistleitner, El-Baz, and Munsch  \cite{AEM}, and we build on some of their work, though their methods fail to cover $\theta<1$, see \cite[Section 9]{AEM} for an explanation. 
 Very recently, Lutsko,   Sourmelidis and Technau \cite{LT} have succeeded in showing a deterministic result, that for $\theta\leq 1/3$ and \underline{all} $\alpha\neq 0$, the pair correlation of $\alpha n^\theta$ is Poissonian.  

\subsubsection*{Acknowledgments}
The authors thank Daniel El-Baz %for useful comments, 
and Shvo Regavim for insightful feedback.

\section{An overview of the proof}
 \subsection{Smoothing}
 
Fix $\theta \in (0,1)$. We study a smooth version of the pair correlation function in which we fix
 $h\in  C^\infty([1,2])$, $h\geq 0$,   and an even $f\in C_{c}^\infty(\R ) $ to define
\begin{equation} \label{def: smooth pair correlation}
R_{f,h}(\alpha, N):=\frac 1N\sum_{  x\neq y \geq 1}
 h\Big(\frac xN\Big)h\Big(\frac yN\Big) 
F_N (\alpha (x^\theta-y^\theta )) ,
\end{equation}
where $F_N$ denotes the $1$-periodised version of $f$ localised to the scale $1/N$,
that is 
$$ 
F_N(x) := \sum_{k\in \Z} f(N(x+k)).
$$

It is a standard argument (see e.g. \cite[\S 8.6.1 and \S 8.6.3]{Marklof}) that a sequence $\bx$ has Poissonian pair correlation, that is \eqref{poisson pair cor}  holds, if and only if
for any $h,f\in C_c^\infty(\R)$ 
\begin{equation}\label{def: smoothed pair correlation function convergence}
\lim_{N\to \infty} R_{f,h}(\alpha, N)  = \intinf f(t) \mathrm{d}t 
\bigg(\intinf h(s) \mathrm{d}s\bigg)^2.
\end{equation}
 Hence we want to show that \eqref{def: smoothed pair correlation function convergence} holds for almost all $\alpha >0$. 
 To fix ideas and simplify the notation, we assume that  $\alpha\in [1,2]$.
 
 Note that choosing $h$ supported in $[1,2]$ rather than $[0,1]$ corresponds to taking $N/2<x,y\leq N$ in \eqref{def pair cor}. 
We do this for convenience, and this kind of modification is natural in statistical physics (to work ``in the bulk'') and one usually 
does not expect it to change any of the local statistics.
A striking exception is for the sequence $\sqrt{n}$ (that is $\theta=1/2$, $\alpha=1$), where the limiting level spacing distribution depends on the choice of $c\in [0,1)$ in choosing $cN<x,y<N $, as was found by  Elkies and McMullen \cite[section 3.5]{EM};  
but the pair correlation (after removing squares) does not, as is pointed out by  El-Baz, Marklof and Vinogradov 
\cite[remark 1, page 5]{EMV}.
Furthermore, it will be convenient to use the notation
\begin{equation*}
\Theta:=\frac 1{1-\theta}.
\end{equation*}
Note that for $\theta\in (0,1)$, we have $\Theta>1$, e.g. $\theta=1/2$ gives $\Theta=2$. 

Throughout the paper, we denote $e(z):=\exp(2\pi \sqrt{-1}z)$, and use a normalized Fourier transform $\^f(x):=\intinf f(y)e(-xy)dy$. 

\bigskip 

We break down our argument into several key steps: 

\subsection{Step 1}
Let 
\begin{equation}\label{def: S,E}
\cE_{N,j}(\alpha) = \sum_{y \geq 1} h\Big(\frac yN\Big) e( \alpha j y^\theta ).
\end{equation}
By the Fourier expansion of $F_N$ and the decay of $\^f$,
 write $R_{f,h}(N)(\alpha)$ as
\begin{multline*}
S(f,h;N)(\alpha) + \intinf f(x)dx \bigg(\intinf h(y)dy\bigg)^2
 - f(0)\intinf h(  y)^2 dy +O(N^{-100})
\end{multline*}
with
$$
S(f,h;N)(\alpha) =\frac {2}{N^2}\sum_{1\leq j \ll N^{1+\epsilon} } \^f\Big(\frac jN\Big) |\cE_{N,j}(\alpha)|^2.
$$

 Now use Poisson summation and stationary phase expansion (van der Corput's ``B-process'') to replace $\cE_{N,j}(\alpha)$ by a shorter sum:
\begin{prop}\label{prop: pruning}
For $j>0$,
\[
\cE_{N,j}(\alpha) = \tilde \cE_{N,j}(\alpha)   + O\left( \frac{N^{1-\frac{\theta}{2}}}{j^{1/2}} \right) 
\]
with
\[
\tilde \cE_{N,j}(\alpha):= c_1 \cdot  (\alpha j)^{\frac \Theta 2} 
\sum_{m   \asymp j/N^{1-\theta}} 
  \frac{1}{ m^{\frac{\Theta+1}2}}  
h\Big(\frac{(\theta\alpha j)^{\Theta}}{Nm^\Theta}\Big)   
e\Big(c_2 \frac{(\alpha j)^{\Theta}}{m^{\Theta-1}} \Big) 
  \]
where $c_1 := \frac{\theta^{\frac 1{2(1-\theta)}}}{\sqrt{1-\theta}} e(-\frac 18)$ and  
$c_2 :=  \theta^{\frac{\theta}{1-\theta}}-\theta^{\frac 1{1-\theta}}$. 
\end{prop}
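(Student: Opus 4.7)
The plan is to apply Poisson summation and then the van der Corput B-process (stationary phase) to the resulting oscillatory integrals. Since $h$ is smooth and compactly supported in $[1,2]$, Poisson summation gives
\[
\cE_{N,j}(\alpha) = \sum_{m \in \Z} I_m, \qquad I_m := \int_0^\infty h(y/N)\, e(\phi_m(y))\, dy,
\]
with phase $\phi_m(y) := \alpha j y^\theta - m y$. For $m>0$, the unique stationary point is $y_0 = (\theta \alpha j/m)^{\Theta}$; it lies in the support range $y \in [N, 2N]$ precisely when $m \asymp j/N^{1-\theta}$, the ``critical range''.

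For $m \leq 0$ or $m$ outside an enlargement of the critical window, $|\phi_m'(y)| \gg \max(|m|, jN^{\theta-1})$ holds uniformly on the support of $h(y/N)$, so repeated integration by parts (boundary terms being killed by the compact support of $h$) yields $I_m = O_A(N^{-A})$ for every $A>0$, contributing negligibly. For $m$ in the critical range I will invoke a quantitative stationary phase formula to write
\[
I_m = \frac{h(y_0/N)}{\sqrt{|\phi_m''(y_0)|}}\, e\!\left(\phi_m(y_0) - \tfrac{1}{8}\right) + E_m,
\]
the phase shift $-1/8$ reflecting $\phi_m''<0$. Using $y_0^{\theta-1} = m/(\theta\alpha j)$, a short computation gives
\[
\phi_m(y_0) \;=\; \tfrac{1-\theta}{\theta}\, m y_0 \;=\; (1-\theta)\,\theta^{\theta/(1-\theta)}\, \frac{(\alpha j)^{\Theta}}{m^{\Theta-1}} \;=\; c_2\,\frac{(\alpha j)^{\Theta}}{m^{\Theta-1}},
\]
together with $|\phi_m''(y_0)| = (1-\theta)\, m^{\Theta+1} N^{2}/(\theta^{\Theta}(\alpha j)^{\Theta})$. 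Substituting $y_0/N = (\theta \alpha j)^{\Theta}/(N m^{\Theta})$ inside $h$ and collecting constants produces the leading factor $c_1$ and the shape of $\tilde\cE_{N,j}(\alpha)$ as claimed.

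The main difficulty is the error bookkeeping. Each main term has size $\asymp N^{1-\theta/2}/j^{1/2}$ and the target remainder is of exactly that order, so the stationary phase error $E_m$ must be smaller than the main term by roughly the length $\asymp j/N^{1-\theta}$ of the critical range in order to sum cleanly. This forces the use of a sharp quantitative stationary phase estimate, and extra care is needed at the endpoints of the critical range, where $y_0/N$ approaches the boundary of $\supp h$ and the standard error bound deteriorates; here smoothness of $h$ and a judicious truncation keep the expansion valid uniformly in $m$.
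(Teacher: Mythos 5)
Your plan is the same as the paper's --- Poisson summation followed by stationary phase (the van der Corput B-process) --- except that the paper does not carry this out term-by-term: it invokes the pre-packaged smooth B-process from Iwaniec--Kowalski (their eq.~8.47), which already aggregates all the per-$m$ errors into the single bound $O\left(G\Lambda^{-1/2} + G\eta^2\log(a-b+1)\right)$. Applied with $g(y)=h(y/N)$ on $[N,2N]$, $\phi(y)=\alpha j y^\theta$, $\Lambda\asymp j/N^{2-\theta}$, $G,\eta\ll 1$, and $a-b\asymp j/N^{1-\theta}$, this gives $O(N^{1-\theta/2}j^{-1/2}+\log N)$, with the $\log N$ absorbed into the main error in the range $j\ll N^{1+\epsilon}$ that is actually used. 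So the bookkeeping you flag as the hardest step is precisely what that citation settles; you are re-deriving the same tool rather than taking a different route.

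Two specific points on your write-up. First, your displayed $|\phi_m''(y_0)| = (1-\theta)m^{\Theta+1}N^2/(\theta^\Theta(\alpha j)^\Theta)$ carries a spurious factor $N^2$: with $\phi_m(y)=\alpha j y^\theta - my$ as you defined it (no rescaling), the correct value is $(1-\theta)m^{\Theta+1}/(\theta^\Theta(\alpha j)^\Theta)$, and only this reproduces the constant $c_1$; with the extra $N^2$ the main term is off by a power of $N$. Second, your error heuristic is too pessimistic for the interior of the critical range: there the next-order stationary-phase correction is $O(\Lambda^{-3/2}N^{-2})$, smaller than the leading term $\Lambda^{-1/2}$ by $\Lambda^{-1}N^{-2}=N^{-\theta}/j$, which beats the factor $N^{1-\theta}/j$ you ask for by a full power of $N$. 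The genuine subtlety is confined to the $m$ at the edges of the critical window where $y_0$ is near the boundary of $\supp h(\cdot/N)$, together with the nonstationary tails; these are what produce the $\log(a-b+1)$ in the IK bound. Your final sentence gestures at this but gives no argument; quoting the IK theorem, as the paper does, is the economical way to close it.
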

Note that $ \tilde \cE_{N,j}(\alpha) =0$ if $0<j\ll N^{1-\theta}$.

\subsection{Step 2} Show that $ \tilde \cE_{N,j}(\alpha)$ 
is typically of size $N^{1/2}$ along a polynomially sparse subsequence: 
We define a probability measure $d\mu(\alpha)$ on $[1,2]$ by 
\begin{equation} \label{def of dmu}
d\mu(\alpha) =\frac{ \Theta \rho(\alpha^\Theta)d\alpha}{\alpha}
\end{equation}
where  
$\rho\in C^\infty([1,2])$, $\rho\geq 0$, 
normalized by $\intinf \rho(x)\frac{dx}{x} =1$. 
 
\begin{prop}\label{prop second moment of E}
The estimate
 \[
 \intinf | \tilde \cE_{N,j}(\alpha) |^2 d\mu(\alpha) \ll N 
 \]
holds uniformly for any $0<j<N^2$.
 \end{prop}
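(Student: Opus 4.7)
\emph{Plan.} The strategy is to make a change of variables that linearizes the phase of $\tilde\cE_{N,j}$ in the integration variable, after which the second moment reduces to a standard diagonal/off-diagonal analysis. Setting $u:=\alpha^\Theta$, we have $du=\Theta\alpha^{\Theta-1}\,d\alpha$, so $d\mu(\alpha)=\rho(u)\,du$ on $[1,2^\Theta]$, and since $(\alpha j)^\Theta=uj^\Theta$ we may write
\[
\tilde\cE_{N,j}=c_1\,j^{\Theta/2}u^{1/2}\sum_{m\asymp M}a_m(u)\,e\!\left(\frac{c_2\,j^\Theta}{m^{\Theta-1}}\,u\right),\qquad M:=\frac{j}{N^{1-\theta}},
\]
where $a_m(u):=m^{-(\Theta+1)/2}h\bigl(\theta^\Theta uj^\Theta/(Nm^\Theta)\bigr)$ is a smooth amplitude whose $u$-derivatives are uniformly $O_k(1)$ on its compact support, since the argument of $h$ is of size $\asymp 1$ precisely when $m\asymp M$. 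If $j<N^{1-\theta}$ the sum is empty and there is nothing to prove, so assume $j\geq N^{1-\theta}$ and hence $M\geq 1$.

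\emph{Diagonal.} Expanding $|\tilde\cE_{N,j}|^2$ and integrating against the smooth measure $u\rho(u)\,du$, the diagonal contribution $m_1=m_2$ is
\[
\ll j^\Theta\sum_{m\asymp M}\frac{1}{m^{\Theta+1}}\asymp j^\Theta M^{-\Theta}=N^{\Theta(1-\theta)}=N,
\]
using the identity $\Theta(1-\theta)=1$. This furnishes the predicted main term.

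\emph{Off-diagonal via integration by parts.} For $m_1\neq m_2$ of size $\asymp M$, the phase in $u$ is linear with derivative
\[
\bigl|c_2\bigr|\,j^\Theta\bigl|m_1^{-(\Theta-1)}-m_2^{-(\Theta-1)}\bigr|\asymp\frac{j^\Theta|m_1-m_2|}{M^\Theta}\asymp N|m_1-m_2|,
\]
and $c_2=\theta^{\theta/(1-\theta)}(1-\theta)>0$ is nonzero, so the phase is genuinely oscillatory. Integrating by parts $A$ times (no boundary terms, since the weight $u\rho(u)a_{m_1}\overline{a_{m_2}}$ is compactly supported) yields a bound $\ll_A(N|m_1-m_2|)^{-A}$. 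Summing against the prefactor $j^\Theta(m_1m_2)^{-(\Theta+1)/2}$ gives
\[
j^\Theta\sum_{m_1\asymp M}\sum_{0<|d|\ll M}\frac{1}{M^{\Theta+1}(N|d|)^{A}}\ll\frac{j^\Theta}{M^\Theta N^A}=N^{1-A},
\]
which is $O(1)$ for $A\geq 2$ and negligible against the diagonal.

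The only subtle point is the uniform boundedness of the $u$-derivatives of $a_m(u)$; this reduces to the observation that $\partial_u h\bigl(\theta^\Theta uj^\Theta/(Nm^\Theta)\bigr)=O(1)$ on the support since $j^\Theta/(Nm^\Theta)\asymp 1$ there, so all factors arising in the IBP are uniformly controlled in $N$, $j$, and $m$.
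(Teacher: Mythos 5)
Your proof is correct and follows essentially the same route as the paper: both pass to the variable $\beta=\alpha^\Theta$ to linearize the phase, then split the resulting double sum over $m_1,m_2$ into a diagonal term giving $\ll j^\Theta M^{-\Theta}=N$ (with $M=j/N^{1-\theta}$) and an off-diagonal term that is negligible by non-stationary phase/repeated integration by parts, which the paper isolates as Lemma~\ref{lem:osc integral 1}. One small slip: after the substitution $u=\alpha^\Theta$ the measure is $d\mu=\rho(u)\,du/u$ rather than $\rho(u)\,du$, but since $u\asymp 1$ on the support of $\rho$ this has no effect on any of the estimates.
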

Let 
$$ 
N_\ell := \ell^C
$$
where $C\in \N$ is a large constant.
We deduce that on the subsequence 
$$
\mathcal N=\{N_\ell,\ell\geq 1\} 
$$ 
for almost all $\alpha\in [1,2]$
all frequencies $j\gg N^{1-\epsilon}$ (where $\epsilon>0$ is small)
are negligible for $N\in \mathcal N$: 
\begin{cor}\label{cor: truncate j less than Neps}
For almost all $\alpha\in [1,2]$ and all $N_\ell \in \mathcal N$,
we have that
\[
S(f,h;N_\ell)(\alpha) =\frac {2}{N_\ell^2}
\sum_{N_\ell^{1-\epsilon} \ll j\ll N_\ell^{1+\epsilon}} 
\^f\Big(\frac{j}{N_\ell}\Big) |\cE_{N_\ell,j}(\alpha)|^2 +O(N_\ell^{-\epsilon/2}),
\]
and 
\begin{equation}\label{eq: crude upper bound}
\frac {2}{N_\ell^2}
\sum_{j\ll N_\ell^{1+\epsilon}} 
\^f\Big(\frac{j}{N_\ell}\Big) |\cE_{N_\ell,j}(\alpha)|^2 \ll N_\ell^{2\epsilon}.
\end{equation}
\end{cor}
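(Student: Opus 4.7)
The plan is to decompose $|\mathcal{E}_{N,j}(\alpha)|^2$ via Proposition \ref{prop: pruning}, control the pruning error deterministically, and then apply the second moment estimate Proposition \ref{prop second moment of E} with Markov's inequality and Borel--Cantelli along the polynomially sparse sequence $N_\ell=\ell^C$.

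First, I would square the pointwise identity from Proposition \ref{prop: pruning} and use $|a+b|^2\ll |a|^2+|b|^2$ to obtain, for every $j\geq 1$,
\[
|\cE_{N,j}(\alpha)|^2 \;\ll\; |\tilde \cE_{N,j}(\alpha)|^2 + \frac{N^{2-\theta}}{j}.
\]
Inserted into $\frac{1}{N^2}\sum_{j\ll N^{1+\epsilon}}\^f(j/N)|\cE_{N,j}|^2$, the explicit error term contributes, uniformly in $\alpha$,
\[
\frac{1}{N^2}\sum_{j\ll N^{1+\epsilon}} \frac{N^{2-\theta}}{j} \;\ll\; N^{-\theta}\log N,
\]
which is absorbed into the $O(N^{-\epsilon/2})$ remainder provided $\epsilon<2\theta$, and is negligible compared to $N^{2\epsilon}$ in the crude bound. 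Moreover, since $\tilde\cE_{N,j}\equiv 0$ for $j\ll N^{1-\theta}$, the $|\tilde\cE_{N,j}|^2$ portion of the small-$j$ sum is effectively supported on $N^{1-\theta}\ll j\ll N^{1-\epsilon}$.

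Next, I would average against $d\mu$ and apply Proposition \ref{prop second moment of E}:
\[
\int \frac{1}{N^2}\sum_{1\leq j\ll N^{1-\epsilon}} |\tilde\cE_{N,j}(\alpha)|^2 \, d\mu(\alpha) \;\ll\; \frac{1}{N^2}\cdot N^{1-\epsilon}\cdot N \;=\; N^{-\epsilon},
\]
so by Markov's inequality the set of $\alpha$ on which this quantity exceeds $N^{-\epsilon/2}$ has $\mu$-measure $\ll N^{-\epsilon/2}$. Specialising to $N=N_\ell=\ell^C$, the exceptional measures sum to $\sum_\ell \ell^{-C\epsilon/2}$, which converges as soon as $C>2/\epsilon$; Borel--Cantelli then gives a set of full $d\mu$-measure on which the truncation error is $\ll N_\ell^{-\epsilon/2}$ for all sufficiently large $\ell$. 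Since $d\mu$ is absolutely continuous with a smooth positive density on its support, this transfers to a Lebesgue-almost-everywhere statement (one can cover $[1,2]$ by varying $\rho$ in \eqref{def of dmu} if needed). The crude bound \eqref{eq: crude upper bound} is proved identically, but with the full range $j\ll N^{1+\epsilon}$: the averaged sum is then $O(N^\epsilon)$, so the Markov threshold $N^{2\epsilon}$ produces exceptional measure $\ll N^{-\epsilon}$, still summable along $\mathcal{N}$ for $C$ large.

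The main ``obstacle'' here is essentially bookkeeping, since the arithmetic content is packaged into Propositions \ref{prop: pruning} and \ref{prop second moment of E}. The only delicate point is the quantitative matching of parameters: one fixes a small $\epsilon<2\theta$ and then takes $C>2/\epsilon$, so that the sparse subsequence $N_\ell=\ell^C$ is dilute enough for Borel--Cantelli to upgrade the averaged second moment bounds to the claimed almost-sure pointwise statements along $\mathcal{N}$.
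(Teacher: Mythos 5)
Your proposal is correct and follows essentially the same path as the paper: square the B-process asymptotic from Proposition~\ref{prop: pruning}, absorb the explicit error term $N^{2-\theta}/j$ uniformly, then apply the second-moment bound of Proposition~\ref{prop second moment of E} together with Markov and Borel--Cantelli along $N_\ell=\ell^C$. The only (cosmetic) difference is that the paper introduces the normalised quantity $Y_{N,\tau}(\alpha)=N^{-(1+\tau)}\sum_{j\ll N^\tau}|\tilde\cE_{N,j}(\alpha)|^2$ with $\tau\in\{1\pm\epsilon\}$ and applies Markov to that, whereas you apply Markov directly to the partial sums at the scale required by each of the two claims; your calibration of the threshold (exceeding $N^{-\epsilon/2}$, resp.\ $N^{2\epsilon}$) lines up a little more transparently with the stated error exponents. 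Your remark about passing from $d\mu$--a.e.\ to Lebesgue--a.e.\ by varying $\rho$ is a legitimate observation that the paper leaves implicit, and both you and the paper take $C$ large enough so the exceptional measures are summable.
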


\subsection{Step 3} 
Let 
\begin{multline*}
 R_{\off}(N)(\alpha) := 
\frac{2|c_1|^2 \alpha^{\Theta}}{N^2} \sum_{N^{1-\epsilon} \ll j\ll N^{1+\epsilon}}  \^f\Big(\frac jN\Big) j^\Theta  
\\
 \sum_{\substack{m\neq n\\    m,n   \asymp j/N^{1-\theta}}} 
  \frac{1}{(mn)^{\frac{\Theta+1}2}}  h\Big(\frac{  (\theta\alpha j)^{\Theta}}{Nm^{\Theta}}\Big)   h\Big(\frac{  (\theta\alpha j)^{\Theta}}{Nn^{\Theta}}\Big) 
   e\left(c_2 \cdot  (\alpha j)^\Theta \left(\frac 1{m^{\Theta-1}}-\frac 1{n^{\Theta-1}}   \right) \right)
 \end{multline*}
the sum is over   $m\neq n$. 
We will show 
\begin{prop}\label{prop:R vs Roff}
For \underline{almost all} $\alpha\in [1,2]$ and all $N_\ell \in \mathcal N$ we have that
\begin{multline*}
 R_{f,h}(N_\ell)(\alpha) = \intinf f(t)dt \bigg(\intinf h(s)ds \bigg)^2  
+ R_{\off}(N_\ell)(\alpha)   +O(N_\ell^{-\epsilon/2}) .
  \end{multline*}
\end{prop}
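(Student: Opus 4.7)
The plan is to substitute the expansion of $R_{f,h}(N_\ell)(\alpha)$ from Step 1 together with Corollary \ref{cor: truncate j less than Neps}, which restricts the $j$-sum in $S(f,h;N_\ell)$ to the range $N_\ell^{1-\epsilon}\ll j \ll N_\ell^{1+\epsilon}$, and then to replace $\cE_{N_\ell,j}(\alpha)$ by the shorter sum $\tilde\cE_{N_\ell,j}(\alpha)$ using Proposition \ref{prop: pruning}. Expanding $|\tilde\cE_{N_\ell,j}(\alpha)|^2$ as a double sum over $m,n$, the off-diagonal part $m\neq n$ is precisely $R_{\off}(N_\ell)(\alpha)$. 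What remains is to show that the diagonal ($m=n$) contribution equals $f(0)\int h^2 + O(N_\ell^{-\epsilon/2})$, which then cancels the $-f(0)\int h^2$ term coming from Step 1.

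For the pruning error, write $\cE_{N,j}=\tilde\cE_{N,j}+E_{N,j}$ with $|E_{N,j}|\ll N^{1-\theta/2}/j^{1/2}$. The contribution of $|E|^2$ to $S(f,h;N)$ is $\ll\frac{1}{N^2}\sum_{j\ll N^{1+\epsilon}}N^{2-\theta}/j\ll N^{-\theta+\epsilon}$, hence negligible. For the cross term $\tilde\cE\,\overline{E}$, I would apply Cauchy--Schwarz: using $|\tilde\cE|^2\leq 2|\cE|^2+2|E|^2$ together with \eqref{eq: crude upper bound} bounds $\sum_j|\^f(j/N)|\,|\tilde\cE|^2\ll N^{2+2\epsilon}$, which paired with the $|E|^2$-bound yields an error $O(N^{-\theta/2+O(\epsilon)})$, comfortably within $O(N^{-\epsilon/2})$ once $\epsilon<\theta$.

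For the diagonal, the $m=n$ contribution to $|\tilde\cE_{N,j}(\alpha)|^2$ is
\[
|c_1|^2(\alpha j)^\Theta \sum_m \frac{1}{m^{\Theta+1}}\,h\Big(\frac{(\theta\alpha j)^\Theta}{Nm^\Theta}\Big)^2.
\]
When $j\gg N^{1-\epsilon}$ the summand is $C^\infty$ with compact support of length $\asymp j/N^{1-\theta}\gg N^{\theta-\epsilon}$ in $m$, so Poisson summation in $m$ approximates the sum by the corresponding integral up to $O_A(N^{-A})$. The substitution $v=(\theta\alpha j)^\Theta/(Nm^\Theta)$ evaluates the integral to $\frac{N}{\Theta(\theta\alpha j)^\Theta}\int h^2$, and using the identity $|c_1|^2=\Theta\theta^\Theta$ (which follows directly from the formula for $c_1$), the diagonal of $|\tilde\cE_{N,j}|^2$ collapses to $N\int h^2$, \emph{independent of both $j$ and $\alpha$}. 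The diagonal contribution to $S(f,h;N)$ is therefore
\[
\frac{2\int h^2}{N}\sum_{N^{1-\epsilon}\ll j\ll N^{1+\epsilon}}\^f\Big(\frac{j}{N}\Big) + O(N^{-\epsilon/2}).
\]
Finally, Poisson summation in $j$ (using the compact support of $f$) yields $\sum_{j\in\Z}\^f(j/N)=Nf(0)+O_A(N^{-A})$; discarding the tail $j\gg N^{1+\epsilon}$ by rapid decay of $\^f$ and the short range $j\ll N^{1-\epsilon}$ (contributing $O(N^{1-\epsilon})$ before the $1/N$ normalization) produces $f(0)\int h^2+O(N^{-\epsilon/2})$, as required.

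The main obstacle is the cross-term estimate in the pruning step, where a purely deterministic bound on $\tilde\cE$ is too weak to eliminate the error; one must invoke the almost-sure upper bound \eqref{eq: crude upper bound}, which is exactly why the proposition is restricted to the subsequence $\mathcal N$ and to a full-measure set of $\alpha$. The remaining ingredients (Poisson summation in $m$ and in $j$, together with the change of variables) are deterministic and unconditional.
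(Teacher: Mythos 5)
Your proposal is correct and follows essentially the same route as the paper: insert the Step~1 identity and Corollary~\ref{cor: truncate j less than Neps}, replace $\cE_{N,j}$ by $\tilde\cE_{N,j}$ via Proposition~\ref{prop: pruning}, handle the $|E|^2$-error trivially and the $\tilde\cE\,\overline{E}$ cross-term by Cauchy--Schwarz against the almost-sure bound $\sum_j|\tilde\cE_{N_\ell,j}|^2\ll N_\ell^{2+O(\epsilon)}$, then split $|\tilde\cE|^2$ into diagonal and off-diagonal. Your explicit computation of the diagonal (Poisson summation in $m$, the change of variables $v=(\theta\alpha j)^\Theta/(Nm^\Theta)$, the identity $|c_1|^2=\Theta\theta^\Theta$, and a final Poisson summation in $j$) is just a spelled-out derivation of the limit formula the paper quotes directly. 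One small imprecision: you invoke \eqref{eq: crude upper bound} to control $\sum_j|\^f(j/N)|\,|\tilde\cE|^2$, but that display has $\^f$ without absolute value and the sum could in principle be small by cancellation; what you actually need is the intermediate estimate $\sum_{j\ll N_\ell^{1+\epsilon}}|\tilde\cE_{N_\ell,j}|^2\ll N_\ell^{2+O(\epsilon)}$ established inside the proof of Corollary~\ref{cor: truncate j less than Neps}, which is stronger and does the job.
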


\subsection{Step 4}
We  average over $\alpha$ and want to show that the \underline{second} 
moment of $R_{\off}(N)(\alpha)$ is small.

\begin{thm}\label{thm:2nd moment of Roff}
For any fixed $\delta \in (0,1-\theta)$, we have
\begin{equation*}
 \intinf |R_{\off}(N)(\alpha)|^2 d\mu(\alpha) \ll N^{-\delta}.
\end{equation*}
\end{thm}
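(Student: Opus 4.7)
The plan is to open the square, perform the measure change $u:=\alpha^\Theta$, extract oscillatory-integral decay by iterated integration by parts, and then bound the resulting Diophantine count. Expanding $|R_{\off}(N)(\alpha)|^2$ as a double sum over 6-tuples $(j_1,m_1,n_1,j_2,m_2,n_2)$, all $\alpha$-dependence collapses into $\alpha^{2\Theta}$, four $h$-factors (whose argument involves $\alpha^\Theta$), and the oscillatory phase $e(c_2\alpha^\Theta A)$, where
\[
A:=j_1^\Theta\Bigl(\frac{1}{m_1^{\Theta-1}}-\frac{1}{n_1^{\Theta-1}}\Bigr)-j_2^\Theta\Bigl(\frac{1}{m_2^{\Theta-1}}-\frac{1}{n_2^{\Theta-1}}\Bigr).
\]
Under $u=\alpha^\Theta$ one has $\alpha^{2\Theta}\,d\mu(\alpha)=u\,\rho(u)\,du$. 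On the summation range $m\asymp j/N^{1-\theta}$, the argument of each $h$-factor, $\theta^\Theta u j^\Theta/(Nm^\Theta)$, is $\asymp u\asymp 1$, so its $u$-derivatives are uniformly $O(1)$. Repeated integration by parts in $u$ against the phase $c_2 u A$ therefore yields the bound $O_K((1+|A|)^{-K})$ for the inner integral.

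Up to a tail from $|A|\gg 1$ which is negligible by the $K$-th order decay, this reduces the estimate to
\[
\int|R_{\off}(N)|^2\,d\mu\ \ll\ \frac{1}{N^4}\cdot N^{2(1-\theta)}\cdot\mathcal T,
\]
where $N^{2(1-\theta)}$ is the worst-case size of the smooth weight $(j_1j_2)^\Theta/(m_1n_1m_2n_2)^{(\Theta+1)/2}$, and $\mathcal T$ counts 6-tuples with $m_i\neq n_i$ and $|A|\le 1$. Setting $g(j,m,n):=j^\Theta(m^{-(\Theta-1)}-n^{-(\Theta-1)})$, one verifies $|g|\asymp N|n-m|$ and $|\partial_n g|\asymp N$ uniformly on the domain. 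Consequently, for each fixed $(j_1,m_1,n_1)$ and $(j_2,m_2)$, at most one integer $n_2$ satisfies $|g_2-g_1|\le 1$. A dyadic decomposition in $k:=|n-m|$, together with the observation that for fixed $k$ the $\asymp N^{1+\theta}$ possible values of $g$ spread over an interval of length $\asymp Nk$, gives density $\asymp N^\theta/k$ of $g$-values at $g_1\asymp Nk$. Summing over $k\le N^\theta$ yields $\mathcal T\ll N^{1+2\theta+\epsilon}$, hence $\int|R_{\off}|^2\,d\mu\ll N^{-1+\epsilon}$, which is comfortably stronger than the claimed $N^{-\delta}$ for any $\delta\in(0,1-\theta)$.

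The principal obstacle is the Diophantine count: establishing the density estimate $\asymp N^\theta/k$ rigorously and uniformly in $k$ (especially in the densest regime $k=1$), and ruling out any arithmetic clustering of $g$-values that could violate the heuristic. The measure change, oscillatory-integral decay, and bookkeeping of the smooth weights are comparatively routine once the derivative bound $|\partial_n g|\asymp N$ is in hand.
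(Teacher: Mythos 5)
Your setup through the Diophantine reduction is essentially the paper's: open the square, change variables $u=\alpha^\Theta$ (which is exactly why the measure $d\mu(\alpha)=\Theta\rho(\alpha^\Theta)d\alpha/\alpha$ is chosen), use repeated integration by parts to gain $O_K((1+|A|)^{-K})$ decay of the $\alpha$-integral, and then discard all but the near-resonant $6$-tuples. This is precisely the content of Lemma~\ref{lem:osc integral} and Proposition~\ref{prop:var}, where the paper packages the near-resonant count as $\mathcal D_{u,q}$ after a dyadic decomposition in $j$ and in $|n-m|$. The arithmetic you do also checks out: the weight $(j_1j_2)^\Theta/(m_1n_1m_2n_2)^{(\Theta+1)/2}\asymp N^{2(1-\theta)}$, $|\partial_n g|\asymp N$, and to get $N^{-\delta}$ for any $\delta<1-\theta$ you need the count to be $\ll N^{1+3\theta+o(1)}$ (which is what Theorem~\ref{thm:bound on Duq} delivers; your density heuristic aims at the stronger $N^{1+2\theta+o(1)}$).

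The gap is exactly where you flag it, and it is not a detail but the technical core of the whole theorem. Your single rigorous observation ($|\partial_n g|\asymp N$ forces at most $O(1)$ admissible $n_2$ per $(j_1,m_1,n_1,j_2,m_2)$) only yields $\mathcal T\ll N^{2+3\theta}$, which, fed into your own bound, gives $N^{\theta}$ for the variance — useless. Everything else in your count rests on the unproved equidistribution heuristic ``$N^{1+\theta}$ values of $g$ spread uniformly over an interval of length $\asymp Nk$.'' This is precisely the statement that the $g$-values do not cluster arithmetically, and establishing it is what takes the paper an entire section of independent machinery. Concretely, the paper: (i) encodes the Diophantine inequality in a twisted second moment $\tfrac{1}{T}\int |D_u(\Theta t)|^2 |P_{u,q}(t)|^2 \Phi(t/T)\,dt$ of two Dirichlet polynomials via a Beurling--Selberg majorant (Lemma~\ref{lem:D in terms of integral}); (ii) uses pointwise and mean-value bounds for the standard Dirichlet polynomial $D_u$ from Montgomery; and (iii) crucially invokes the Robert--Sargos theorem on the number of $(m_1,n_1,m_2,n_2)$ with $|m_1^{1-\Theta}-n_1^{1-\Theta}+m_2^{1-\Theta}-n_2^{1-\Theta}|<\gamma M^{1-\Theta}$ to control the ``diagonal'' subsystem $\mathcal Z_{u,q,\mathrm{diag}}$ (Lemma~\ref{lem: diophantine inequality upper bound}), which is what rigorously rules out clustering of $g$-values. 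Without a substitute for step (iii), your proposal does not close; you would need to either supply a proof of the density heuristic in the critical regime $k\asymp 1$ or import Robert--Sargos (or a comparable input) as the paper does.
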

By using Chebychev's inequality and the Borell--Cantelli lemma, 
we deduce from
Theorem~\ref{thm:2nd moment of Roff} 
that
$$
 \lim_{\ell \to \infty}R_{\off}(N_\ell)(\alpha)= 0
$$ 
holds for almost all $\alpha \in [1,2]$ and all $N_\ell \in \mathcal N$.
  Inserting   into Proposition~\ref{prop:R vs Roff}, we obtain
\begin{thm}\label{thm: subsequence convergence theorem}
For almost all $\alpha \in [1,2]$, and all $N_\ell \in \mathcal N$, we have
\[
 \lim_{\ell \to \infty}  R_{f,h}(N_\ell)(\alpha)  =\intinf f(t)dt \Big(\intinf h(s)ds\Big)^2.
 \]
\end{thm}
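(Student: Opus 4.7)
The plan is to derive Theorem 2.6 from Proposition 2.4 and Theorem 2.5 via a standard Chebyshev--Borel--Cantelli argument along the polynomially sparse subsequence $\mathcal N = \{\ell^C\}$. The polynomial sparsity is the key structural feature: any power-saving bound on the $L^2(\mu)$-norm of $R_{\off}(N_\ell)$ automatically becomes Borel--Cantelli summable once $C$ is chosen large enough relative to the savings.

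Concretely, fix $\delta \in (0, 1-\theta)$ and $\eta \in (0, \delta/2)$. Chebyshev's inequality combined with Theorem 2.5 yields
\[
\mu\bigl(\{\alpha\in [1,2]: |R_{\off}(N_\ell)(\alpha)| > N_\ell^{-\eta}\}\bigr) \leq N_\ell^{2\eta} \intinf |R_{\off}(N_\ell)(\alpha)|^2 d\mu(\alpha) \ll N_\ell^{2\eta - \delta},
\]
and with $N_\ell = \ell^C$ chosen so that $C(\delta-2\eta)>1$, the right-hand side is summable in $\ell$. The Borel--Cantelli lemma then supplies a set of full $\mu$-measure on which $|R_{\off}(N_\ell)(\alpha)| \leq N_\ell^{-\eta}$ for all but finitely many $\ell$, and in particular $R_{\off}(N_\ell)(\alpha) \to 0$ along $\mathcal N$. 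For a suitable choice of the smooth density $\rho$ (one strictly positive on the relevant range, so that the density $\Theta\rho(\alpha^\Theta)/\alpha$ of $\mu$ is bounded below on $[1,2]$), the measure $\mu$ is comparable to Lebesgue measure, and the same convergence therefore holds Lebesgue-almost-everywhere.

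It only remains to combine this with Proposition 2.4. Intersecting the Lebesgue full-measure set just obtained with the one produced by Proposition 2.4, we have for every $\alpha$ in the intersection and every $N_\ell \in \mathcal N$
\[
R_{f,h}(N_\ell)(\alpha) = \intinf f(t)dt \bigg(\intinf h(s)ds\bigg)^2 + R_{\off}(N_\ell)(\alpha) + O(N_\ell^{-\epsilon/2}),
\]
and since $R_{\off}(N_\ell)(\alpha) \to 0$ and $N_\ell^{-\epsilon/2} \to 0$ as $\ell \to \infty$, the theorem follows at once. The genuine analytic obstacles have already been overcome in Proposition 2.4 and Theorem 2.5; what remains is a routine probabilistic repackaging, the only point requiring slight care being the verification that the smoothing measure $\mu$ dominates Lebesgue measure on $[1,2]$, which is a matter of choosing $\rho$ appropriately.
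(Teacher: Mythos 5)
Your argument is correct and matches the paper's Step~4 sketch exactly: Chebyshev's inequality applied to the second-moment bound of Theorem~\ref{thm:2nd moment of Roff}, summability of $N_\ell^{2\eta-\delta}=\ell^{-C(\delta-2\eta)}$ thanks to the polynomial sparsity of $\mathcal N$, Borel--Cantelli to conclude $R_{\off}(N_\ell)(\alpha)\to 0$ almost everywhere along $\mathcal N$, and then substitution into Proposition~\ref{prop:R vs Roff}. One small inaccuracy worth flagging: the density $\Theta\rho(\alpha^\Theta)/\alpha$ of $\mu$ can never be bounded below on all of $[1,2]$, because $\rho$ is compactly supported in $[1,2]$ while $\alpha^\Theta>2$ as soon as $\alpha>2^{1/\Theta}$ (recall $\Theta>1$); the standard patch is to run the argument for a countable family of choices of $\rho$ (and, if one wants all $\alpha>0$, of base intervals) whose supports cover the relevant range and intersect the resulting full-measure sets --- a point the paper likewise leaves implicit.
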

By employing a standard (deterministic)
argument,  
one deduces Theorem \ref{main thm} 
from Theorem \ref{thm: subsequence convergence theorem}.
 For instance, one can use \cite[Lemma 3.1]{RT} which give that if 
Theorem \ref{thm: subsequence convergence theorem} holds
for all $f\in C_c^\infty(\R)$,  for a strictly increasing sequence of positive integers
 $\mathcal N=\{ N_\ell \}_{\ell=1}^\infty $, with 
$
  N_{\ell+1}/N_\ell\sim  1
 $  
  then we can pass from the subsequence $\mathcal N$ 
to the set of all integers, in the sense that 
  Theorem \ref{thm: subsequence convergence theorem} 
   holds for all integers $N\geq 1$.

 \section{Applying Poisson summation}
 \subsection{Applying Poisson summation for the first time}
 The Fourier expansion of $F_N(x)$ can be written, upon 
recalling that $f$ is even (and therefore also $\^f$ is even), as
 \[
 F_N(x)= \frac 1N\sum_{j \in \Z} \^f\Big(\frac jN\Big)e(jx) =  \frac 1N\intinf f(x)dx + \frac{2}{N}\sum_{j=1}^\infty \^f\Big(\frac jN\Big)e(jx).
 \] 
Inserting into the definition of $R_{f,h}(N)(\alpha)$ we obtain
$$
R_{f,h}(N)(\alpha)  =
\frac 1{N^2}\sum_{j \in \Z} \^f\Big(\frac jN\Big) \sum_{x\neq y\geq 1}  
h\Big(\frac xN\Big)h\Big(\frac yN\Big) e(\alpha j x^\theta ) e(-\alpha j y^\theta ).
$$
Next by adding and subtracting the $x=y$ diagonal, the right hand side is
$$
\frac 1{N^2}\sum_{j \in \Z} \^f\Big(\frac jN\Big) 
\left( \sum_{x \geq 1} h\Big(\frac xN\Big) e(\alpha j x^\theta )\sum_{y \geq 1} h\Big(\frac yN\Big) e(-\alpha j y^\theta )-\sum_{x \geq 1} h\Big(\frac xN\Big)^2  \right).
$$
By recalling the definition of $\cE_{N,j}(\alpha)$, see \eqref{def: S,E}, we find that
\begin{equation}\label{convert E}
R_{f,h}(N)(\alpha) =\frac 1{N^2}\sum_{j \in \Z} \^f\Big(\frac jN\Big) |\cE_{N,j}(\alpha)|^2 - \frac 1{N^2}\sum_{j \in \Z} \^f\Big(\frac jN\Big) \sum_{x \geq 1} h\Big(\frac xN\Big)^2.
\end{equation}

Using the trivial bound $|\cE_{N,j}(\alpha)|\ll N$, we can truncate the frequencies 
$j>N^{1+\epsilon}$ with negligible error. 
Further, the term with $j=0$ contributes
\begin{equation*}
\^f(0) \frac {| \cE_{N,0}(\alpha)|^2}{N^2} = \^f(0) 
\Big|\frac 1N \sum_{y \geq 1} 
h\Big(\frac yN\Big)\Big|^2 =\intinf f(x)dx \bigg(\intinf h(y)dy\bigg)^2 +O(N^{-100})
\end{equation*}
which is the limit that we are aiming for.

The diagonal $x=y$ term equals: 
\begin{align*}
 \frac 1{N^2}\sum_{j \in \Z} \^f\Big(\frac jN\Big) \sum_{x \geq 1} h\Big(\frac xN\Big)^2 
& = \intinf \^f(x)dx \intinf h(  y)^2 dy +O(N^{-100}) \\
& =f(0)\intinf h(  y)^2 dy +O(N^{-100}) .
\end{align*}

We obtain 
\begin{align}\label{relating R and S}
R_{f,h}(N)(\alpha) =S(f,h;N)(\alpha) & + \intinf f(x)dx \bigg(\intinf h(y)dy\bigg)^2
\\ \nonumber
 & -f(0)\intinf h(y)^2 dy +O(N^{-100})    
\end{align}
where 
\begin{equation}\label{convert S to E}
S(f,h;N)(\alpha) =\frac {2}{N^2}\sum_{1\leq j \ll N^{1+\epsilon}} \^f\Big(\frac jN\Big) |\cE_{N,j}(\alpha)|^2 .
\end{equation}

\subsection{Applying van der Corput's B-process}\label{sec:stationary phase general}

To deal with the terms including the smooth exponential sum $\cE_{N,j}(\alpha) $, we  apply Poisson summation  and a stationary phase expansion (van der Corput's ``B-process'').  
Recall 
$$
\tilde \cE_{N,j}(\alpha)= c_1 \cdot  (\alpha j)^{\frac \Theta 2} 
\sum_{m   \asymp j/N^{1-\theta}} 
  \frac{1}{ m^{\frac{\Theta+1}2}}  
h\Big(\frac{(\theta\alpha j)^{\Theta}}{Nm^\Theta}\Big)   
e\Big(c_2 \frac{(\alpha j)^{\Theta}}{m^{\Theta-1}} \Big),
$$
as well as the constants $c_1$, and $c_2$ from Proposition \ref{prop: pruning}.

\begin{prop}\label{prop:apply poisson to exp sum a}
For $\alpha \in [1,2]$, and $j>0$,   
\begin{equation*} 
\cE_{N,j}(\alpha)  = \tilde \cE_{N,j}(\alpha)
 +O\Big(\frac{N^{1-  \frac{\theta}2}}{j^{1/2}}\Big).
\end{equation*}
\end{prop}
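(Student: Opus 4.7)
The plan is a direct execution of van der Corput's B-process: Poisson summation followed by stationary phase on each resulting Fourier coefficient. Since $h$ is supported in $[1,2]$, the summand $h(y/N)e(\alpha j y^\theta)$ vanishes for $y\leq N/2$, in particular for $y\leq 0$, so one may first extend the range to all of $\Z$ and apply Poisson:
\[
\cE_{N,j}(\alpha)=\sum_{y\in\Z}h(y/N)e(\alpha j y^\theta)=\sum_{m\in\Z}I_m,
\]
where, after the substitution $t=Nu$,
\[
I_m=N\int_{0}^{\infty}h(u)\,e(\phi_m(u))\,du,\qquad \phi_m(u):=\alpha j N^{\theta}u^{\theta}-mNu.
\]

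Next I would analyze the critical points of $\phi_m$. Setting $\phi_m'(u)=\theta\alpha j N^{\theta}u^{\theta-1}-mN=0$ yields a (unique) positive solution only when $m>0$, namely
\[
u_0=u_0(m)=\Big(\frac{\theta\alpha j}{mN^{1-\theta}}\Big)^{\Theta},
\]
which lies in $[1,2]=\supp h$ precisely when $m\asymp j/N^{1-\theta}$ (with implicit constants depending on $\supp h$). For all other $m$ (including $m\leq 0$) the derivative $|\phi_m'(u)|$ is bounded below on $\supp h$ by a quantity of order $\max(|m|N,\alpha j N^{\theta})$, so iterated integration by parts against the smooth compactly supported $h$ gives $|I_m|\ll_K(\max(|m|N,\alpha j N^{\theta}))^{-K}$ for every $K\geq 1$; summing these tails over the non-stationary frequencies yields a negligible contribution $O(N^{-100})$.

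For each $m$ in the stationary range I would then apply stationary phase with large parameter $\lambda:=\alpha j N^{\theta}$, writing $\phi_m=\lambda\psi_m$ with $\psi_m$ and $\psi_m''(u)=\theta(\theta-1)u^{\theta-2}$ of unit order on $[1,2]$. The standard asymptotic expansion gives
\[
I_m=\frac{N h(u_0)}{\sqrt{|\phi_m''(u_0)|}}\,e\Big(\phi_m(u_0)-\tfrac18\Big)+O\Big(\frac{N}{\lambda^{3/2}}\Big),
\]
where the $-1/8$ reflects $\phi_m''<0$. Using the identity $u_0^{1-\theta}=(\theta\alpha j)/(mN^{1-\theta})$, together with the algebraic relations $(1-\theta)\Theta=1$ and $\Theta-1=\theta\Theta$, one computes $\phi_m(u_0)=c_2(\alpha j)^{\Theta}/m^{\Theta-1}$ and $|\phi_m''(u_0)|=(1-\theta)mN/u_0$. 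Substituting these and rewriting $u_0=(\theta\alpha j)^{\Theta}/(Nm^{\Theta})$ inside the argument of $h$ reproduces exactly the $m$-th summand of $\tilde\cE_{N,j}(\alpha)$ with the advertised constant $c_1=\theta^{\Theta/2}(1-\theta)^{-1/2}e(-1/8)$.

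The main obstacle I expect is uniformly tallying the errors in $j$. The per-$m$ stationary phase remainder $N/\lambda^{3/2}=N^{1-3\theta/2}/j^{3/2}$, summed over the $\asymp j/N^{1-\theta}$ stationary values of $m$, contributes a total of $N^{-\theta/2}/j^{1/2}$, comfortably inside the claimed bound. The delicate case is the regime where $u_0(m)$ approaches the boundary of $\supp h$, since the leading-term/remainder split above can degenerate there; this can be handled either by invoking a uniform stationary phase lemma that tracks constants as the stationary point drifts to an endpoint, or alternatively by excising a small neighbourhood of the boundary and bounding the excised piece by the trivial estimate $|I_m|\ll N/\sqrt{\lambda}=N^{1-\theta/2}/\sqrt{j}$ — precisely the size of the error term in the proposition, which explains the clean shape of the stated bound (it already absorbs a single $I_m$ worth of boundary slack).
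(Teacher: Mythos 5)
Your proposal is correct and is essentially the paper's own argument: both are applications of the van der Corput B-process (Poisson summation followed by stationary phase) to $\cE_{N,j}$, with the same identification of critical points $u_0(m)=(\theta\alpha j/(mN^{1-\theta}))^{\Theta}$ and the same algebra producing $c_1$, $c_2$, and the argument of $h$. The only difference is presentational: the paper invokes a packaged form of the B-process, namely \cite[Eq.~8.47]{IK}, whose error term $G\Lambda^{-1/2}+G\eta^2\log(a-b+1)$ directly yields $O(N^{1-\theta/2}/j^{1/2}+\log N)$ with $\Lambda\asymp j/N^{2-\theta}$, whereas you redo Poisson and stationary phase by hand. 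Your closing remark about the boundary regime is exactly the point that the black-box theorem absorbs into its $G\Lambda^{-1/2}$ term, and your fallback (a second-derivative test giving $|I_m|\ll N/\sqrt{jN^\theta}=N^{1-\theta/2}/j^{1/2}$ for the $O(1)$ boundary indices) lands on precisely the stated error, so the argument is complete either way.
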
 
Note that  for $1\leq j \ll N^{1-\theta}$, we just obtain an upper bound since in that case $\tilde \cE_{N,j}(\alpha)=0 $.

For proving the above proposition, we
quote the following version of the smooth B-process (see  \cite[eq 8.47]{IK}):
\begin{thm}
Let $\phi\in C^4[A,B]$  be real valued so that there are $\Lambda>0$ and $\eta\geq 1$ with
\[
\Lambda\leq |\phi^{(2)}(x)| \leq \eta \Lambda, \qquad   
|\phi^{(3)}(x)| <\frac {\eta \Lambda}{B-A}, \qquad 
|\phi^{(4)}(x)| <\frac {\eta \Lambda}{(B-A)^2}
\]
for all $x\in [A,B]$. Further, assume that $\phi^{(2)}<0$ on $[A,B]$. 
Let $a=\phi'(A)$, and $b=\phi'(B)$. Then for all smooth functions $g$, 
\begin{multline*}
\sum_{n\in [A,B]} g(n) e(\phi(n)) = \sum_{m\in [b,a]} \frac{ g(x_m)}{|\phi''(x_m)|^{1/2}} 
e\Big(\phi(x_m)-mx_m-\frac 18\Big) 
\\
+O\left(  G\Lambda^{-1/2} + G\eta^2 \log(a-b+1) \right)
\end{multline*}
where $x_m$ is the unique solution to $\phi'(x_m)=m$, and $G=|g(B)|+\int_A^B |g'(t)|dt$. 
\end{thm}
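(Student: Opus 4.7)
The plan is to derive this smoothed B-process via Poisson summation followed by a stationary phase analysis of the resulting Fourier integrals. First I would write
\[
\sum_{n \in [A,B]} g(n) e(\phi(n)) = \sum_{m \in \Z} I_m,
\qquad
I_m := \int_A^B g(x)\, e\bigl(\phi(x) - mx\bigr)\, dx,
\]
where the boundary effect of truncating to $[A,B]$ is absorbed into the quantity $G := |g(B)| + \int_A^B |g'(t)|\, dt$ via a smooth partition or an Abel summation argument. The task is then to identify which $m$ contribute a true stationary-phase main term and to bound the rest.

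Next I would split the frequencies $m \in \Z$ according to whether the phase $\psi_m(x) := \phi(x) - mx$ has an interior critical point. Since $\phi'' < 0$ on $[A,B]$, the derivative $\phi'$ strictly decreases from $a = \phi'(A)$ to $b = \phi'(B)$, so $\psi_m'(x_m) = 0$ has a unique solution $x_m \in [A,B]$ precisely when $m \in [b,a]$. For the non-stationary frequencies $m \notin [b,a]$, one has $|\psi_m'(x)| \gg |m-a| + |m-b|$ uniformly on $[A,B]$, and repeated integration by parts against $e(\psi_m)$ (exploiting the hypothesis bounds on $\phi^{(3)}$ and $\phi^{(4)}$ to control derivatives of $1/\psi_m'$) shows that $\sum_{m \notin [b,a]}|I_m|$ is absorbed into the stated error. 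The transition frequencies $|m-a| \ll 1$ and $|m-b| \ll 1$, where the stationary point approaches an endpoint, require a more delicate treatment and are what produce the logarithmic factor $\log(a-b+1)$.

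For the main range $m \in [b,a]$, I would apply the classical stationary phase lemma at $x_m$. Expanding
\[
\psi_m(x) = \psi_m(x_m) + \tfrac12 \phi''(x_m)(x - x_m)^2 + \text{cubic and quartic remainders},
\]
and using $\phi''(x_m) < 0$, the Gaussian integral evaluates to $|\phi''(x_m)|^{-1/2} e(-1/8)$, giving
\[
I_m = \frac{g(x_m)}{|\phi''(x_m)|^{1/2}} e\Bigl(\phi(x_m) - m x_m - \tfrac 18\Bigr) + E_m.
\]
The hypotheses $|\phi^{(3)}| \ll \eta\Lambda/(B-A)$ and $|\phi^{(4)}| \ll \eta\Lambda/(B-A)^2$ precisely bound the cubic and quartic Taylor remainders of $\phi$ on the scale $|x - x_m| \ll \Lambda^{-1/2}$ on which the Gaussian is concentrated, yielding the individual error $E_m \ll G/\sqrt{\Lambda}$ after summing.

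The main obstacle, and essentially the only substantive work, is the error bookkeeping. One must track uniformity in $m$ so that summing the stationary-phase remainders $E_m$ over the (potentially long) interval $m \in [b,a]$ still delivers a clean total of $G\Lambda^{-1/2}$, and one must carefully handle the transition near the endpoints to squeeze the integration-by-parts contribution into only a $G\eta^2\log(a-b+1)$ loss rather than something worse. This is exactly the analysis carried out in Iwaniec--Kowalski's derivation of equation (8.47) in \cite{IK}, whose statement we import verbatim for use in proving Proposition \ref{prop:apply poisson to exp sum a}.
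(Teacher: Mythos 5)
The paper does not prove this theorem: it is quoted verbatim from Iwaniec--Kowalski \cite[eq.~8.47]{IK}, with no proof supplied. Your proposal correctly recognizes this, sketches the standard Poisson-summation-plus-stationary-phase derivation, and then explicitly defers to IK for the bookkeeping, so in substance you are doing exactly what the paper does. One small imprecision in your sketch: the non-stationary lower bound should be $|\psi_m'(x)| \geq \operatorname{dist}(m,[b,a])$ rather than $\gg |m-a|+|m-b|$; the latter is false for $m$ just outside $[b,a]$ (e.g.\ $m=a+1$ gives $|\psi_m'|\geq 1$ while $|m-a|+|m-b|\approx a-b$). The correct distance bound is also what produces the $\log(a-b+1)$ via the harmonic sum over frequencies near the endpoints. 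Since you treat the result as an imported black box, this does not affect anything downstream.
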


\begin{proof}[Proof of Proposition \ref{prop:apply poisson to exp sum a}]
 We apply this to the sum 
 \[
 \cE_{N,j}(\alpha)   = \sum_{y\geq 1} h\Big(\frac yN\Big)e(  \alpha j y^\theta)
 \]
 where  $g(y) = h(y/N)$, so that $[A,B]=[N,2N]$,  $G=O(1)$, and  
$ \phi(x)  =   \alpha j x^\theta$ for which
\[
\phi'(x) =  \frac{\theta \alpha j}{x^{1-\theta}}, \qquad \phi''(x) =- \frac{\theta(1-\theta)\alpha j}{x^{2-\theta}} .
\]
Therefore we can choose
\[
 \Lambda\asymp \frac{j}{N^{2-\theta}}, \quad \eta \ll 1, 
 \]
 and 
 \[
 [b,a]  \approx   \frac{j}{N^{1-\theta}} [1, 2^{1-\theta}]   
  \]
(also recall that we assume that  $\alpha \in  \supp \rho = [1,2]$). 
The critical points are solutions of
\[
 \frac{\theta \alpha j}{x_m^{1-\theta}} = m \quad \leftrightarrow \quad x_m  = \Big(\frac {\theta\alpha j}{m}\Big)^\Theta .
\]

We find
\begin{multline*}
 \cE_{N,j}(\alpha)   = \frac{\theta^{\Theta/2}}{\sqrt{1-\theta}}
 (\alpha j)^{\Theta/2} e\Big(-\frac 18\Big) \sum_{m\asymp j/N^{1-\theta}} 
\frac 1{m^{\frac{\Theta+1}{2}}} 
h\Big(\frac{(\theta \alpha j)^\Theta}{Nm^\Theta}\Big) 
e\Big(\frac{(\theta^{\Theta-1}-\theta^\Theta)(\alpha j)^\Theta}{m^{\Theta-1}}\Big) 
\\ + O\left(  \frac{N^{1-\frac{ \theta}{2}}}{j^{1/2}}  + \log N  \right)
 =\tilde \cE_{N,j}(\alpha) + O\left(  \frac{N^{1-\frac{ \theta}{2}}}{j^{1/2}} \right)
\end{multline*}
as claimed.

In particular there are no critical points unless $j \gg N^{1-\theta}$,  in which case we just obtain an upper bound. 
\end{proof}

 \section{The second moment of \ensuremath{\tilde \cE_{N,j}} and proof of Corollary~\ref{cor: truncate j less than Neps}}
Let $\rho\in C^\infty([1,2])$, $\rho\geq 0$, normalized so that $\int_1^2\rho(x)\frac{dx}{x}=1$. 
We define a smooth   measure  on $[1,2]$ by 
 \[
 d\mu(\alpha) = \frac{\Theta\rho(\alpha^\Theta) d\alpha}{\alpha}
 \]
 which satisfies $\intinf d\mu(\alpha)=1$.
Further, we require the following application of the non-stationary phase principle
dealing with the oscillatory integrals 
 \begin{multline*}
\mathcal I(j,m,n,N) :=\int_1^2 \alpha^\Theta  h\Big(\frac{(\theta \alpha j)^\Theta}{Nm^\Theta}\Big)  
h\Big(\frac{(\theta \alpha j)^\Theta}{Nn^\Theta}\Big)  
e\left(c_2
\left(\frac { (\alpha j)^\Theta}{m^{\Theta-1}} -\frac { (\alpha j)^\Theta}{n^{\Theta-1}} \right) \right) 
d\mu(\alpha) .
 \end{multline*} 
 To bound the off-diagonal terms, we invoke 
 \begin{lem}\label{lem:osc integral 1}
For all $K\geq 1$, there is some $C_K=C_{K,h,\rho}>0$ so that for all distinct  
$m,n\asymp j/N^{1-\theta}$, and all $j\gg N^{1-\theta}$, we have that
\begin{equation*}
\left | \cI(j,m,n,N) \right| 
\leq C_K N^{-K} .
\end{equation*}
\end{lem}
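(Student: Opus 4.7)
The plan is to apply the non-stationary phase principle to
\[
\cI(j,m,n,N) = \int_1^2 A(\alpha)\, e(\Phi(\alpha))\, d\alpha
\]
with
\[
\Phi(\alpha) := c_2 (\alpha j)^\Theta \Big(\tfrac{1}{m^{\Theta-1}}-\tfrac{1}{n^{\Theta-1}}\Big), \quad A(\alpha) := \Theta\,\alpha^{\Theta-1} \rho(\alpha^\Theta)\, h\!\Big(\tfrac{(\theta\alpha j)^\Theta}{Nm^\Theta}\Big) h\!\Big(\tfrac{(\theta\alpha j)^\Theta}{Nn^\Theta}\Big).
\]
First I would show that $|\Phi'(\alpha)|\gg N$ uniformly on $[1,2]$, then verify that $A$ is smooth with compact support and with derivatives bounded independently of $j,m,n,N$, and finally iterate integration by parts $K$ times to extract $|\cI|\ll_K N^{-K}$.

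For the phase derivative, a direct computation gives
\[
\Phi'(\alpha) = c_2\,\Theta\,\alpha^{\Theta-1} j^\Theta \Big(\tfrac{1}{m^{\Theta-1}}-\tfrac{1}{n^{\Theta-1}}\Big).
\]
Since $m\neq n$ are both $\asymp j/N^{1-\theta}$, the mean value theorem yields
\[
\Big|\tfrac{1}{m^{\Theta-1}}-\tfrac{1}{n^{\Theta-1}}\Big| \asymp \tfrac{|m-n|}{m^{\Theta}} \asymp \tfrac{|m-n|\, N^{(1-\theta)\Theta}}{j^\Theta} = \tfrac{|m-n|\, N}{j^\Theta},
\]
where the crucial cancellation uses the identity $(1-\theta)\Theta = 1$. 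Combined with $\alpha^{\Theta-1}\asymp 1$ on $[1,2]$, this gives $|\Phi'(\alpha)| \gg |m-n|\,N \geq N$. Continued differentiation likewise shows $|\Phi^{(k)}(\alpha)| \asymp |\Phi'(\alpha)|$ for every $k\geq 1$.

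For the amplitude, set $\psi_m(\alpha) := (\theta\alpha j)^\Theta/(Nm^\Theta)$. The homogeneity relation $\alpha\psi_m'(\alpha)=\Theta\psi_m(\alpha)$ implies that each derivative $\psi_m^{(k)}$ is a bounded combination of powers of $\alpha^{-1}$ times $\psi_m(\alpha)$. On the support of $h(\psi_m(\alpha))$ one has $\psi_m(\alpha)\asymp 1$, so by Fa\`a di Bruno all derivatives of $\alpha \mapsto h(\psi_m(\alpha))$ (and likewise for $n$) are $O(1)$ uniformly in $j,m,n,N$. Together with the smoothness of $\alpha^{\Theta-1}\rho(\alpha^\Theta)$ on $[1,2]$, this yields $|A^{(k)}(\alpha)| \ll_k 1$ uniformly in all parameters.

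Finally, I would iterate
\[
\int A\, e(\Phi)\, d\alpha = -\int \tfrac{d}{d\alpha}\!\Big(\tfrac{A}{2\pi i\Phi'}\Big)\, e(\Phi)\, d\alpha
\]
a total of $K$ times. Under the standard convention that the bump functions $h$ and $\rho$ vanish smoothly at the endpoints of their supports, the amplitude $A$ has compact support in the interior of $[1,2]$, so no boundary terms appear at any stage. Expanding the iterated derivatives and inserting the bounds $|\Phi^{(k)}|\asymp|\Phi'|$ together with $|A^{(k)}|\ll_k 1$ produces $|\cI(j,m,n,N)| \ll_K |\Phi'|^{-K}\ll_K N^{-K}$, as required. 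The main obstacle is the bookkeeping of the many cross-derivative terms generated by iteration, but this is handled automatically by the uniform comparability $|\Phi^{(k)}|\asymp|\Phi'|$ and the parameter-free $O(1)$ bounds on $A^{(k)}$ coming from the homogeneity observation above.
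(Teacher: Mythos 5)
Your proof is correct, but it takes a genuinely different route from the paper. You work directly in the $\alpha$-variable and run a standard non-stationary phase (iterated integration by parts) argument: you verify $|\Phi'(\alpha)| \gg |m-n|\,N \geq N$ on the support, check that the amplitude has parameter-uniform $O(1)$ derivatives via the homogeneity of $\psi_m$, and iterate. The paper instead exploits the special form of the measure $d\mu(\alpha) = \Theta\rho(\alpha^\Theta)\,d\alpha/\alpha$: the substitution $\beta=\alpha^\Theta$ simultaneously \emph{linearizes} the phase and cancels the $\alpha^\Theta$ prefactor, turning $\cI$ into the exact Fourier transform of the compactly supported function $F(\beta)=\rho(\beta)h(\beta x_m)h(\beta x_n)$ evaluated at $y=-c_2 j^\Theta(m^{1-\Theta}-n^{1-\Theta})$, at which point $|\^F(y)|\ll_K|y|^{-K}$ is immediate and the only remaining step is the lower bound $|y|\gg N$ (the same mean-value-theorem computation you do). Both arguments are sound; the paper's is slicker precisely because $d\mu$ was designed to make the change of variables clean, whereas your version is more self-contained and would apply to an arbitrary smooth weight. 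One small overstatement: $|\Phi^{(k)}|\asymp|\Phi'|$ is not a two-sided comparison for all $k$ (e.g.\ if $\Theta$ is an integer, higher derivatives of $\alpha^\Theta$ vanish identically); only the upper bound $|\Phi^{(k)}|\ll|\Phi'|$ is needed, and that always holds, so the argument is unaffected.
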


\begin{proof}

Changing variables $\beta = \alpha^\Theta$, and recalling the definition \eqref{def of dmu} of
\[
d\mu(\alpha) =\frac{ \Theta \rho(\alpha^\Theta)d\alpha}{\alpha }  = \frac{\rho(\beta)d\beta}{\beta},
\]  
the oscillatory integral $\cI(j,m,n,N) $  can be written as
\begin{equation*}
  \mathcal I(j,m,n,N) =\int_0^\infty  e\left( \beta  \cdot c_2 j^\Theta\left(\frac 1{m^{\Theta-1}} -\frac 1{n^{\Theta-1}} \right) \right)  
   h(\beta  x_m)   h(\beta x_n)    
 \rho(\beta) d\beta
\end{equation*}
with
\[
 x_m = \left( \frac{\theta j}{N^{1-\theta}m} \right)^\Theta .
\]
Since $\rho$ and $h$ are supported in the interval $[1,2]$, we must have $\beta\in [1,2]$ and $\beta x_m, \beta x_n  \in [1,2]$. Hence for the integral  to be  nonzero, we must have 
\[
x_m,x_n \in \Big[\frac 12,2\Big].  
\]

The integral is the Fourier transform of the function $F\in C_c^\infty(\R)$ (which depends on $j,m,n,N$):
\[
F(\beta) = \rho(\beta)   h(\beta x_m)h(\beta x_n) , 
\]
evaluated at the point $-c_2 j^\Theta\left(\frac 1{m^{\Theta-1}} -\frac 1{n^{\Theta-1}} \right)$. For any $F\in C_c^\infty(\R)$, we can bound the Fourier transform using integration  by parts $K$ times:
 \[
 |\^F(y)|\leq \frac 1{(2\pi |y|)^K}\intinf |F^{(K)}(\beta)|d\beta, \quad y\neq 0 .
 \] 
 In our case, $|F|_\infty \ll 1$, and the derivatives of $F$ are bounded by 
 \[
 |F^{(K)}|_\infty \ll C_{h,\rho}\left(1+\max \left( x_m^K, x_n^K\right)\right)
 \]
 and since $x_m,x_n = O(1)$ we obtain $  |F^{(K)}|_\infty =O(1)$.
 Therefore if $m\neq n$ then  
 \[
  |\^F(y)| \ll_K \frac 1{|y|^K} .
  \] 

Finally, note that if $m\neq n$  (but both $m,n\asymp j/N^{1-\theta}$), 
then the frequency $y$ of the Fourier transform is bounded below by 
\begin{align*}
|y| & =  \bigg | c_2 j^\Theta\left(\frac 1{m^{\Theta-1}} -\frac 1{n^{\Theta-1}}\right) \bigg|\\
& \gg j^\Theta\left|\frac 1{n^{\Theta-1}} -\frac 1{m^{\Theta-1}} \right| 
\\ 
& \gg \frac{j^\Theta}{ (mn)^{\Theta-1}}(n-m) m^{\Theta-2} \\
& \gg \frac{j^\Theta N^{2(\Theta-1)(1-\theta)}}{ j^{2(\Theta-1)}} 
\cdot 1 \cdot
 (\frac{j}{N^{1-\theta}})^{\Theta-2} = N^{\Theta(1-\theta)} = N.
 \end{align*}
Thus $| \mathcal I(j,m,n,N)|\ll |\^F(y)|\ll_K N^{-K}$. 
\end{proof}
We proceed to show that $| \tilde \cE_{N,j}(\alpha) |^2$
exhibits, essentially, square-root cancellation on average over $\alpha \in [1,2]$. 
An important step to this end is:
\begin{prop}\label{prop second moment of E} 
Assume $N^{1-\theta} \ll j<N^2$. Then 
 \[
 \intinf | \tilde \cE_{N,j}(\alpha) |^2 d\mu(\alpha) \ll N .
  \]
 \end{prop}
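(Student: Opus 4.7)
The plan is to open the square $|\tilde\cE_{N,j}(\alpha)|^2$, integrate term by term against $d\mu$, and split into a diagonal contribution $m=n$ (which we bound trivially and is the main term of size $N$) and an off-diagonal contribution $m\neq n$ (which we make negligibly small via Lemma~4.1).

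Writing out $|\tilde\cE_{N,j}(\alpha)|^2$ from the formula preceding Proposition~4.3 and integrating against $d\mu$, the factor $\alpha^{\Theta}$ together with the test functions $h(\tfrac{(\theta\alpha j)^{\Theta}}{Nm^{\Theta}})$, $h(\tfrac{(\theta\alpha j)^{\Theta}}{Nn^{\Theta}})$ and the phase $e(c_2(\alpha j)^{\Theta}(m^{1-\Theta}-n^{1-\Theta}))$ assemble to exactly $\cI(j,m,n,N)$. Hence
\[
\intinf |\tilde\cE_{N,j}(\alpha)|^2\,d\mu(\alpha)=|c_1|^2\, j^{\Theta}\sum_{m,n\asymp j/N^{1-\theta}}\frac{\cI(j,m,n,N)}{(mn)^{(\Theta+1)/2}}.
\]
For the diagonal $m=n$, the integrand defining $\cI(j,m,m,N)$ is a bounded nonnegative function, so trivially $\cI(j,m,m,N)\ll 1$. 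Since $m\asymp j/N^{1-\theta}$, the summation length is $\asymp j/N^{1-\theta}$, so
\[
\sum_{m\asymp j/N^{1-\theta}}\frac{1}{m^{\Theta+1}}\asymp\frac{j/N^{1-\theta}}{(j/N^{1-\theta})^{\Theta+1}}=\frac{N^{(1-\theta)\Theta}}{j^{\Theta}}=\frac{N}{j^{\Theta}},
\]
using $\Theta(1-\theta)=1$. Multiplying by the prefactor $j^{\Theta}$ yields the desired diagonal bound $\ll N$.

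For the off-diagonal $m\neq n$, I invoke Lemma~4.1 with $K$ chosen arbitrarily large, giving $\cI(j,m,n,N)\ll_K N^{-K}$. The surviving sum can be estimated crudely: the number of pairs is $\ll (j/N^{1-\theta})^2$, and $1/(mn)^{(\Theta+1)/2}\ll (j/N^{1-\theta})^{-(\Theta+1)}$, so the off-diagonal total is bounded by
\[
j^{\Theta}\cdot(j/N^{1-\theta})^{1-\Theta}\cdot N^{-K}=j\cdot N^{(\Theta-1)(1-\theta)}\cdot N^{-K}=j\cdot N^{\theta}\cdot N^{-K}\ll N^{2+\theta-K},
\]
where I used the range $j<N^2$. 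Taking $K$ sufficiently large (say $K=100$) makes this negligible, and adding it to the diagonal bound completes the proof.

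The main obstacle is really already handled in Lemma~4.1: the key arithmetic input is that the phase difference $c_2 j^{\Theta}(m^{1-\Theta}-n^{1-\Theta})$ has absolute value $\gg N$ whenever $m\neq n$ in the relevant range, which gives the non-stationary phase decay. With that lemma in hand, the present proposition reduces to the routine diagonal/off-diagonal decomposition sketched above.
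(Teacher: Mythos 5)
Your proof is correct and follows essentially the same route as the paper: expand the square, recognize the resulting integral as $\cI(j,m,n,N)$, bound the diagonal trivially to get $\ll N$, and kill the off-diagonal using the non-stationary-phase decay from Lemma~\ref{lem:osc integral 1} together with the assumption $j<N^2$. The only cosmetic difference is that the paper first replaces the factor $j^{\Theta}/(mn)^{(\Theta+1)/2}$ by $N^{2-\theta}/j$ before counting pairs, whereas you count pairs first and simplify the exponents at the end; the arithmetic is identical.
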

\begin{proof}
Recall that 
\[
 \tilde\cE_{N,j}(\alpha)=c_1 (\alpha j)^{\Theta/2} 
\sum_{m\asymp j/N^{1-\theta}} \frac 1{m^{\frac{\Theta+1}{2}}} 
h\Big(\frac{(\theta \alpha j)^\Theta}{Nm^\Theta}\Big) 
e\Big(\frac{c_2 (\alpha j)^\Theta}{m^{\Theta-1}}\Big).
 \]
 Hence
\begin{equation}\label{bound for E via I}
\begin{split}
\int_1^2 |\tilde\cE_{N,j}(\alpha)|^2 d\mu(\alpha) & \ll
j^\Theta \sum_{m,n\asymp j/N^{1-\theta}}
\frac 1{(mn)^{\frac{\Theta+1}{2}}} |\mathcal I(j,m,n,N)|
\\
&\ll \frac{N^{2-\theta}}{j} \sum_{m,n\asymp j/N^{1-\theta}} |\mathcal I(j,m,n,N)| .
\end{split}
\end{equation}
 
The diagonal terms $m=n$ contribute
\[
 \ll \frac{N^{2-\theta}}{j} \sum_{m\asymp j/N^{1-\theta}} 1
\ll \frac{N^{2-\theta}}{j}\frac{j}{N^{1-\theta}} = N .
\]

We  conclude the proof of Proposition~\ref{prop second moment of E} by deducing that  the total contribution 
of all the off-diagonal terms to \eqref{bound for E via I} is bounded by 
\begin{multline*}
\frac{N^{2-\theta}}{j} 
\sum_{\substack{m,n \asymp j/N^{1-\theta}\\ m\neq n}} |\mathcal \cI(j,m,n,N)| 
\ll \frac{N^{2-\theta}}{j}\Big(\frac{j}{N^{1-\theta}}\Big)^2 N^{-100} 
\ll jN^{-99} \ll N^{-90}
 \end{multline*}
since we  assumed that $j\ll N^2$.
\end{proof}
We can now proceed to the:
\begin{proof}[Proof of Corollary~\ref{cor: truncate j less than Neps}]
Recall $N_\ell = \lfloor \ell^{C}\rfloor $ as well as, see \eqref{convert S to E}, that 
\begin{equation*} 
S(f,h;N)(\alpha) =\frac {2}{N^2}
\sum_{1\leq j\ll N^{1+\epsilon} } \^f\Big(\frac jN\Big) |\cE_{N,j}(\alpha)|^2.
\end{equation*}
Now fix $\tau\in \{1-\epsilon, 1+ \epsilon \}$. We observe,
by using Proposition~\ref{prop:apply poisson to exp sum a}, that
\begin{align}
\frac {2}{N^2}
\bigg| \sum_{1\leq j\ll N^\tau} \^f\Big(\frac jN\Big) |\cE_{N,j}(\alpha)|^2 
\bigg| 
& \ll 
\frac 1{N^2} \sum_{1\leq j\ll N^\tau } 
\left( |\tilde{\cE}_{N,j}(\alpha)|^2 + \frac{N^{2-\theta}}{j} \right) \nonumber
 \\
& \ll 
\frac 1{N^2} \bigg( \sum_{1\leq j\ll N^\tau } 
 |\tilde{\cE}_{N,j}(\alpha)|^2 \bigg) + \frac{1}{N^{\epsilon}}
\label{eq: intermediate crude bound}
\end{align}
Next, we consider 
$$
Y_{N,\tau}(\alpha) := \frac{1}{N^{1+\tau}}
\sum_{j\ll N^{\tau}} \vert \tilde \cE_{N,j}(\alpha)\vert^2,
$$ 
and infer from Proposition \ref{prop second moment of E} that
$$
\intinf Y_{N,\tau}(\alpha)  \, \mathrm{d}\mu(\alpha)
 \ll 1.
$$
By the Borel--Cantelli lemma and Markov's inequality,
$$ 
Y_{N_\ell,\tau}(\alpha) \ll N_\ell^{\epsilon}
$$  
almost all $\alpha\in [1,2]$.
Using this bound in \eqref{eq: intermediate crude bound} completes the proof.
\end{proof}

\section{Proof of Proposition~\ref{prop:R vs Roff}  }

\begin{proof}
Recall \eqref{relating R and S}:
\begin{multline}\label{relating R and S v2}
R_{f,h}(N)(\alpha) =S(f,h;N)(\alpha) + \intinf f(x)dx \bigg(\intinf h(y)dy\bigg)^2
\\
 -f(0)\intinf h(  y)^2 dy +O(N^{-100})    
\end{multline}
with
\begin{equation}\label{convert S to E v2}
S(f,h;N)(\alpha) =\frac {2}{N^2}\sum_{1\leq j \ll N^{1+\epsilon}} \^f\Big(\frac jN\Big) |\cE_{N,j}(\alpha)|^2 .
\end{equation}
Inserting Proposition~\ref{prop:apply poisson to exp sum a} 
and Corollary~\ref{cor: truncate j less than Neps}
in \eqref{convert S to E v2} gives 
\begin{multline*}
S(f , h;N)(\alpha) =\frac {2}{N^2}\sum_{N^{1-\epsilon} \leq j\ll N^{1+\epsilon}} \^f\Big(\frac jN\Big) |\cE_{N,j}(\alpha)|^2 +O(N^{-\epsilon/2}) 
\\
=  
\frac {2}{N^2}\sum_{1\leq j\ll N^{1+\epsilon}} \^f\Big(\frac jN\Big) |\tilde \cE_{N,j}(\alpha)|^2 
\\
 +O\left(   \frac{N^{1-\theta/2}}{N^2} \sum_{1\leq j\ll N^{1+\epsilon}} 
 \frac{\vert \^f(\frac jN)\vert }{j^{1/2}} |\tilde \cE_{N,j}(\alpha)|
 +N^{-\theta}\sum_{1\leq j\ll N^{1+\epsilon}} \frac{\vert \^f(\frac jN)\vert }{j}  \right) .
   \end{multline*}
 
The second term in the $O$-symbol can be dispensed with by
  \[
 \sum_{1\leq j\ll N^{1+\epsilon}} \frac{\vert \^f(\frac jN)\vert }{j}
\ll  \sum_{1\leq j \ll N^{1+\epsilon}} \frac 1j      \ll   \log N .
 \]
We proceed to bound the first term in the $O$-symbol.
By using the estimate just above and $\vert \^f(\frac jN) \vert = O(1)$,
the Cauchy--Schwarz inequality implies
\begin{multline*}
     \frac {N^{1-\theta/2} }{N^2} \sum_{1\leq j\ll N^{1+\epsilon}} 
     \frac{\vert \^f(\frac jN)\vert}{j^{1/2}} |\tilde \cE_{N,j}(\alpha)| 
    \ll   N^{-1/2-\theta/2+o(1)} 
\Bigg(\sum_{1\leq j\ll N^{1+\epsilon}}  
|\tilde \cE_{N,j}(\alpha)|^2\Bigg)^\frac{1}{2}.
  \end{multline*}
Hence we infer from \eqref{eq: crude upper bound}
that for almost all $\alpha \in [1,2]$ the bound
   $$
     \frac {N_\ell^{1-\theta/2} }{N_\ell^2} \sum_{1\leq j\ll N_\ell^{1+\epsilon}} 
     \frac{\vert \^f(\frac {j}{N_\ell})\vert}{j^{1/2}} |\tilde \cE_{N_\ell,j}(\alpha)| 
 \ll  N_\ell^{ -\theta/2+2\epsilon}
  $$
holds. The upshot is that, for almost all $\alpha$
and all $N=N_\ell\in \mathcal N$, we have
\[
S(f , h;N_\ell)(\alpha) =\frac {2}{N_\ell^2}
\sum_{N_\ell^{1-\epsilon} \leq j\ll N_\ell^{1+\epsilon}} 
\^f\Big(\frac {j}{N_\ell}\Big) |\tilde \cE_{N_\ell,j}(\alpha)|^2 
+O(N_\ell^{-\theta/2+O(\epsilon)}) .
\]

We for ease of notation, we write now $N$ in place of $N_\ell$.
Thus we find that for almost all $\alpha$,  and all $N\in \mathcal N$,
\begin{equation}\label{S in terms of tilde S}
 S(f,h;N)(\alpha)=\tilde S(f,h;N)(\alpha) +O(N^{-\epsilon/2})
\end{equation}
 where
\[
\tilde S(f,h;N)(\alpha) = \frac {2}{N^2}\sum_{N^{1-\epsilon} \ll j \ll N^{1+\epsilon}  }  \^f\Big(\frac jN\Big) |\tilde \cE_{N,j}(\alpha)|^2 .
\]

We write out
\begin{multline}\label{second exp for R gen}
\tilde S(f , h;N)(\alpha) =\frac {2}{N^2}\sum_{N^{1-\epsilon} \ll j\ll N^{1+\epsilon}  } \^f\Big(\frac jN\Big) |\tilde \cE_{N,j}(\alpha)|^2 
\\= 
\frac{2|c_1|^2 \alpha^{\Theta}}{N^2} \sum_{N^{1-\epsilon} \ll j \ll  N^{1+\epsilon}  }  \^f\Big(\frac jN\Big) j^\Theta  
\\
 \sum_{m,n \asymp j/N^{1-\theta}} 
  \frac{1}{(mn)^{\frac{\Theta+1}2}}  h\Big(\frac{  (\theta\alpha j)^{\Theta}}{Nm^{\Theta}}\Big)   h\Big(\frac{  (\theta\alpha j)^{\Theta}}{Nn^{\Theta}}\Big) 
   e\left(c_2\; (\alpha j)^\Theta \left(\frac 1{m^{\Theta-1}}-\frac 1{n^{\Theta-1}}   \right)\right) .
   \end{multline}
Restricting the sum over $m,n$ to the diagonal $m=n$ gives a term of the form 
\[
\sum_{n\geq 1} \frac 1{n^{\Theta+1}} h\Big( \frac{W}{n^\Theta }\Big)^2\ 
= \frac {1-\theta}{ W} \intinf h(y)^2dy +O(W^{-100}), 
\]
with
\[ 
W=\frac{  (\theta\alpha j)^{\Theta}}{N} \gg N^{\Theta-1-\epsilon} \to \infty .
\]
Thus the contribution of the diagonal $m=n$ to \eqref{second exp for R gen} is
\begin{equation*}
\begin{split}
\mbox{terms with }(m=n)& +|c_1|^2   \frac{1-\theta}{\theta^\Theta} \frac 1N   \sum_{j \geq 1}  \^f\Big(\frac jN\Big) \intinf h(y)^2dy +O(N^{-10})
\\
& =  f(0)\intinf h(y)^2dy  +O(N^{-10})
\end{split}
\end{equation*}
which exactly cancels off the diagonal term in \eqref{convert E}.  Thus 
\begin{equation}\label{tilde S in terms of Roff} 
\tilde S(f , h;N)(\alpha) =  f(0)\intinf h(y)^2dy   + R_{\off}(N)(\alpha)  +O(N^{-10})
\end{equation}
where the off-diagonal terms are $ R_{\off}(N)(\alpha)$, that is
\begin{multline}\label{third exp for R}
\frac{2|c_1|^2 \alpha^{\Theta}}{N^2} \sum_{N^{1-\epsilon} \ll j\ll N^{1+\epsilon}  } \^f\Big(\frac jN\Big) j^\Theta  
\\
 \sum_{\substack{m\neq n\\    m,n   \asymp j/N^{1-\theta}}} 
  \frac{1}{(mn)^{{\frac{\Theta+1}2}}}  h\Big(\frac{  (\theta\alpha j)^{\Theta}}{Nm^{\Theta}}\Big)   h\Big(\frac{  (\theta\alpha j)^{\Theta}}{Nn^{\Theta}}\Big) 
   e\Big(c_2\; (\alpha j)^\Theta\Big(\frac 1{m^{\Theta-1}}-\frac 1{n^{\Theta-1}}  \Big)\Big).
 \end{multline}

Inserting \eqref{S in terms of tilde S} and \eqref{tilde S in terms of Roff}  into \eqref{relating R and S v2} we obtain
 \begin{multline*}
 R_{f,h}(N)(\alpha) = \intinf f(t)dt (\intinf h(s)ds)^2  + R_{\off}(N)(\alpha)    
+O(N^{-\epsilon/2})
 \\
  \end{multline*}
 for a.e. $\alpha\in [1,2]$  and all $N\in \mathcal N$,
 which is Proposition~\ref{prop:R vs Roff}.
 \end{proof}

\section{The second moment of \ensuremath{R_{\off}(N)(\alpha)}}
We now average $|R_{\off}(N)(\alpha) |^2$ over $\alpha$, 
by using the measure \eqref{def of dmu}. 
The goal is to show Theorem~\ref{thm:2nd moment of Roff}, namely
\begin{equation}\label{variance bound} 
\intinf  \left| R_{\off}(N)(\alpha) \right|^2  d\mu(\alpha) \ll N^{-\delta}
\end{equation}
for $\delta \in (0,1-\theta)$.
To this end, we require a simple lemma on oscillatory integrals.
By arguing as in the proof of Lemma~\ref{lem:osc integral 1} we find:
\begin{lem}\label{lem:osc integral}
With $z_1= m_1^{1-\Theta}- n_2^{1-\Theta}$ and  
$z_2= m_2^{1-\Theta}- n_2^{1-\Theta}$, we define
\begin{multline*}
 \mathcal I(\vec j, \vec m,\vec n, N) = 
 \intinf e\left(c_2 \cdot  \alpha^\Theta \left( j_1^\Theta z_1 -j_2^\Theta z_2   \right) \right)
   \prod_{r=1}^2  h\Big(\frac{  (\theta\alpha j)^{\Theta}}{Nm_r^{\Theta}}\Big)   
h\Big(\frac{  (\theta\alpha j)^{\Theta}}{Nn_r^{\Theta}}\Big)   
\alpha^{2\Theta} d\mu(\alpha).
\end{multline*}
For all $K\geq 1$, there is some $C=C_{K,h,\rho}$ so that for all  $m,n,j$, 
\begin{equation*}
\left |\mathcal I(\vec j, \vec m,\vec n, N)  \right| 
\leq C \min \left( 1, \frac 1{ |j_1^\Theta z_1 -j_2^\Theta z_2|^K} \right)  .
\end{equation*}
Moreover, $\mathcal I(\vec j, \vec m,\vec n, N)=0$ unless
\begin{equation}\label{condition on m vs j}
m_r, n_r \asymp j_r/N^{1-\theta} \quad(r=1,2).
\end{equation}
\end{lem}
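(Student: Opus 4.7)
The plan is to mimic the argument of Lemma~\ref{lem:osc integral 1} almost verbatim, so the proof will be short.

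First I would change variables $\beta = \alpha^\Theta$, which turns the measure $d\mu(\alpha) = \Theta \rho(\alpha^\Theta) d\alpha/\alpha$ into $\rho(\beta) d\beta/\beta$, and note that $\alpha^{2\Theta} d\mu(\alpha) = \beta \rho(\beta) d\beta$. Setting
\[
x_{m_r} := \Big(\frac{\theta j_r}{N^{1-\theta} m_r}\Big)^\Theta, \qquad x_{n_r} := \Big(\frac{\theta j_r}{N^{1-\theta} n_r}\Big)^\Theta,
\]
the factors $h((\theta\alpha j_r)^\Theta/(N m_r^\Theta))$ become $h(\beta x_{m_r})$ (and similarly for $n_r$). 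Therefore
\[
\mathcal I(\vec j, \vec m,\vec n, N) = \int_0^\infty e\bigl(c_2 \beta (j_1^\Theta z_1 - j_2^\Theta z_2)\bigr) F(\beta) d\beta,
\]
with
\[
F(\beta) := \beta \rho(\beta) \prod_{r=1}^2 h(\beta x_{m_r}) h(\beta x_{n_r}).
\]

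Next I would read off the support condition. Since $\rho$ and $h$ are supported in $[1,2]$, the integrand vanishes unless $\beta\in[1,2]$ and $\beta x_{m_r}, \beta x_{n_r}\in[1,2]$, which forces $x_{m_r}, x_{n_r}\in[\tfrac12, 2]$, i.e.\ $m_r, n_r \asymp j_r/N^{1-\theta}$ as claimed in \eqref{condition on m vs j}. This gives the vanishing statement of the lemma and, simultaneously, the trivial $O(1)$ bound, since on its support $F$ is uniformly $C_c^\infty$-bounded: all derivatives $F^{(K)}$ are $O_K(1)$ uniformly in the parameters because $x_{m_r}, x_{n_r} = O(1)$ throughout.

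Finally, recognizing the integral as (up to sign conventions) the Fourier transform $\widehat F(-c_2(j_1^\Theta z_1 - j_2^\Theta z_2))$ of a uniformly $C_c^\infty$-bounded function, $K$-fold integration by parts yields
\[
|\mathcal I(\vec j, \vec m,\vec n, N)| \ll_K \frac{1}{|j_1^\Theta z_1 - j_2^\Theta z_2|^K}
\]
for the nonzero frequency, and combining with the trivial bound gives the desired $\min(1, |\cdot|^{-K})$ estimate. There is no genuine obstacle here since both $x_{m_r}$ and $x_{n_r}$ are $O(1)$ on the support — the only mild point to verify is that this uniformity of derivative bounds persists even with four $h$-factors present, which follows from the product rule together with the $O(1)$ bounds on $x_{m_r}, x_{n_r}$; in contrast to Lemma~\ref{lem:osc integral 1}, no lower bound on the frequency is asserted, so we do not need to exploit the precise structure of $z_1, z_2$.
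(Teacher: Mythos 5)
Your proof is correct and follows exactly the route the paper indicates: the paper gives no separate proof for this lemma, merely stating ``By arguing as in the proof of Lemma~\ref{lem:osc integral 1} we find:'' — which is precisely the substitution $\beta=\alpha^\Theta$, the support analysis, and the $K$-fold integration-by-parts bound you carry out. Your observation that no lower bound on the frequency is needed here (unlike in Lemma~\ref{lem:osc integral 1}) correctly identifies the one substantive difference.
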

Next, we relate the variance of $R_{\off}$ 
to counting the solutions of Diophantine inequalities:
\begin{prop}\label{prop:var}
Let $U=(1+\epsilon) \log N$, $Q=(\theta+\epsilon) \log N$. Then
\begin{equation}\label{eq: second moment of Roff}
\intinf  \left| R_{\off}(N)(\alpha) \right|^2  d\mu(\alpha) \ll \frac {(\log N)^2}{N^{2\theta} } \sum_{(1-\epsilon)\log N<u\leq U}\sum_{0\leq q\leq Q} e^{-2u} \mathcal D_{u,q}
\end{equation}
where 
\begin{equation}\label{def of Du}
\mathcal D_{u,q} = \#\left\{ 
\begin{array}{l}
 e^u\leq  j_1,j_2<e^{u+1},
 m_1,m_2,n_1,n_2\asymp \frac{e^u}{N^{1-\theta}},
\\ \\  
e^q \leq n_1-m_1, n_2-m_2 < e^{q+1}:
\\  \\
\left| j_1^\Theta(\frac 1{m_1^{\Theta-1}}-\frac 1{n_1^{\Theta-1}}) -j_2^\Theta (\frac 1{m_2^{\Theta-1}}- \frac 1{n_2^{\Theta-1}})  \right| \leq N^\epsilon
\end{array}
\right\}.
\end{equation}
\end{prop}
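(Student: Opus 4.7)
My strategy is to open up $|R_{\off}(N)(\alpha)|^2 = R_{\off}\overline{R_{\off}}$ as a six-fold sum over $(j_1, j_2, m_1, n_1, m_2, n_2)$, exchange with the integral against $d\mu(\alpha)$, and recognise the resulting $\alpha$-integral as $\cI(\vec j, \vec m, \vec n, N)$ from Lemma~\ref{lem:osc integral}: the phase produced is precisely $e(c_2 \alpha^\Theta (j_1^\Theta z_1 - j_2^\Theta z_2))$ with $z_r = m_r^{1-\Theta} - n_r^{1-\Theta}$, and the four $h$-factors match the integrand of $\cI$ on the nose. Then I apply Lemma~\ref{lem:osc integral} with $K$ large enough to absorb all tuples violating $|j_1^\Theta z_1 - j_2^\Theta z_2| \leq N^\epsilon$ into a negligible error; on the complementary set Lemma~\ref{lem:osc integral} also enforces $m_r, n_r \asymp j_r/N^{1-\theta}$ for $r=1,2$.

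\textbf{Coefficients and dyadic decomposition.} On the surviving region the estimate
\[
\frac{j_r^\Theta}{(m_r n_r)^{(\Theta+1)/2}} \asymp \frac{N^{(1-\theta)(\Theta+1)}}{j_r} = \frac{N^{2-\theta}}{j_r}
\]
holds by the identity $(1-\theta)(\Theta+1) = 2-\theta$, and multiplying this by the two $1/N^2$ prefactors from each copy of $R_{\off}$ produces an overall coefficient $\ll N^{-2\theta}/(j_1 j_2)$ in front of the indicator of the Diophantine inequality. Since $R_{\off}$ is supported on $N^{1-\epsilon} \ll j \ll N^{1+\epsilon}$, I decompose $j_r \in [e^{u_r}, e^{u_r+1})$ with $u_r \in ((1-\epsilon)\log N, U]$ and $n_r - m_r \in [e^{q_r}, e^{q_r+1})$ with $0 \leq q_r \leq Q$, the upper bound $Q$ coming from $n_r - m_r \ll j_r/N^{1-\theta} \ll N^{\theta+\epsilon}$. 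A mean value computation gives $z_r = (\Theta-1)(n_r - m_r)\xi_r^{-\Theta}$ with $\xi_r \asymp m_r$, hence $j_r^\Theta z_r \asymp (n_r - m_r) N$; since both sides of the Diophantine inequality are positive and individually $\gg N$, the inequality forces the two dyadic scales $q_r$ to agree up to $O(1)$, which is absorbed into the dyadic partition so that $q_1 = q_2 = q$. For the two $j$-scales, the elementary inequality $1/(j_1 j_2) \leq \tfrac12(j_1^{-2} + j_2^{-2})$ together with the symmetry in $(j_1,j_2)$ reduces the analysis to the diagonal $u_1 = u_2 = u$. The number of remaining configurations in each $(u,q)$-block is by definition $\cD_{u,q}$, while the two collapses (one in $u$, one in $q$) together account for the $(\log N)^2$ factor in the stated bound.

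\textbf{Main obstacle.} The substantive arithmetic—bounding $\cD_{u,q}$—is deferred to the subsequent section; the present proposition is a bookkeeping reduction, and the delicate step is the passage from the analytic inequality $|j_1^\Theta z_1 - j_2^\Theta z_2| \leq N^\epsilon$ to the clean statement ``$q_1 = q_2 = q$'' after dyadic absorption. For that one has to verify that the implicit constants in $z_r \asymp (n_r - m_r)/m_r^\Theta$ are uniform in all variables, so that the resulting $O(1)$-ambiguity genuinely fits into one dyadic block and no polylogarithmic factor is lost beyond the advertised $(\log N)^2$.
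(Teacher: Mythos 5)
Your opening plan is exactly right: expand $|R_{\off}|^2$, integrate against $d\mu$, recognise the $\alpha$-integral as $\mathcal I(\vec j,\vec m,\vec n,N)$, apply Lemma~\ref{lem:osc integral} to restrict to $|j_1^\Theta z_1-j_2^\Theta z_2|\leq N^\epsilon$ and $m_r,n_r\asymp j_r/N^{1-\theta}$, and use $j_r^\Theta/(m_rn_r)^{(\Theta+1)/2}\asymp N^{2-\theta}/j_r$. Your observation that $j_r^\Theta z_r\asymp(n_r-m_r)N$ (so $q_1=q_2+O(1)$ is forced) is also correct. However, there is a genuine gap in the reduction to the diagonal $u_1=u_2=u$.

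The inequality $1/(j_1 j_2)\leq\tfrac12(j_1^{-2}+j_2^{-2})$ together with symmetry gives you $\sum j_1^{-2}\mathbf 1\{\cdot\}$, i.e.\ $\sum_{u_1}e^{-2u_1}\sum_{u_2}\#\{j_1\in[e^{u_1},e^{u_1+1}),\,j_2\in[e^{u_2},e^{u_2+1}),\dots\}$. This does \emph{not} collapse to $\sum_u e^{-2u}\mathcal D_{u,q}$, because $\mathcal D_{u,q}$ requires both $j_1$ and $j_2$ to lie in the \emph{same} dyadic block $[e^u,e^{u+1})$, whereas after your manipulation $j_2$ still ranges over the full window $[N^{1-\epsilon},N^{1+\epsilon}]$. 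Nor does the Diophantine inequality close this gap: since $j_r^\Theta|z_r|\asymp e^{q_r}N$ is independent of $u_r$ (the $e^{\Theta u_r}$ in $j_r^\Theta$ cancels against the $e^{-\Theta u_r}$ in $|z_r|$), matching the sizes of $j_1^\Theta z_1$ and $j_2^\Theta z_2$ constrains $q_1,q_2$ but places no constraint on $u_1,u_2$. Consequently the claim that ``the two collapses (one in $u$, one in $q$) together account for the $(\log N)^2$'' is misplaced: the $q$-collapse is a genuine $O(1)$-block absorption and costs nothing, while the $u$-collapse is simply not achieved. The paper instead decomposes $R_{\off}(\alpha)$ into the $O(UQ)=O((\log N)^2)$ dyadic pieces $X_{u,q}(\alpha)$ \emph{before} squaring, and applies Cauchy--Schwarz in the form $|\sum_{u,q}X_{u,q}|^2\leq UQ\sum_{u,q}|X_{u,q}|^2$; this is what forces both $j_1,j_2\in[e^u,e^{u+1})$ and both $n_i-m_i\in[e^q,e^{q+1})$ simultaneously, and it is the single source of the $(\log N)^2$ factor. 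You need this (or an equivalent device) to land on $\mathcal D_{u,q}$ as defined.
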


\begin{proof}
We rewrite \eqref{third exp for R}  as 
\begin{multline*}
 R_{\off}(N)(\alpha)   = \frac { 2|c_1|^2 \alpha^\Theta  }{N^2}\sum_{(1-\epsilon)\log N \leq u\leq U}\sum_{0\leq q\leq Q} 
 \sum_{e^u \leq j<e^{u+1}} \^f\Big(\frac jN\Big) j^\Theta  
 \\
  \sum_{\substack{m, n \asymp e^u/N^{1-\theta}\\e^q\leq  n-m<e^{q+1}}}  
\frac 1{(mn)^{\frac{\Theta+1}{2}}} 
h\Big(\frac{  (\theta\alpha j)^{\Theta}}{Nm^{\Theta}}\Big)   
h\Big(\frac{  (\theta\alpha j)^{\Theta}}{Nn^{\Theta}}\Big) 
   e\left(c_2 \cdot  (\alpha j)^\Theta 
\left(\frac 1{m^{\Theta-1}}-\frac 1{n^{\Theta-1}}   \right) \right).
\end{multline*}  
Integrating over $\alpha$ and applying Cauchy-Schwarz gives
that the left hand side of \eqref{eq: second moment of Roff} is 
at most a constant times
\begin{multline*}
\frac {UQ}{N^4}\sum_{\substack{(1-\epsilon)\log N \leq u\leq U\\ 0\leq q \leq Q}} 
 j_1^\Theta j_2^\Theta 
\sum_{\substack{e^u\leq j_1, j_2 <e^{u+1}  \\ 
m_1, n_1, m_2, n_2 \asymp e^u/N^{1-\theta}\\
e^q\leq n_i-m_i<e^{q+1}}} 
\frac {| \mathcal I(\vec j, \vec m,\vec n, N)|}{(m_1n_1m_2n_2)^{\frac{1+\Theta}{2}}}.
\end{multline*}

We  see from Lemma~\ref{lem:osc integral} that the oscillatory integral $\mathcal I(\vec j, \vec m,\vec n, N)$ vanishes unless \eqref{condition on m vs j} holds, 
and is negligible unless 
\begin{equation}
\left| j_1^\Theta z_1-j_2^\Theta z_2 \right| \leq  N^\epsilon.
\end{equation}
Hence, 
noting $j_r^\Theta/(m_rn_r)^{\frac {1+\Theta}{2}}\asymp e^{-u}N^{2-\theta}$, 
we infer $\intinf  \left| R_{\off}(N)(\alpha) \right|^2  d\mu(\alpha)$
is at most a constant times
\begin{align*}
& \frac{(\log N)^2}{N^4} 
\sum_{\substack{(1-\epsilon)\log N\leq u\leq U\\ 0\leq q\leq Q}}   
\sum_{\substack{ e^u\leq j_1, j_2<e^{u+1} 
\\m_i,n_i\asymp e^u/N^{1-\theta} 
\\ e^q\leq n_i-m_i<e^{q+1} }}   
  e^{-2u}N^{4-2\theta}
  \mathbf{1}\left\{ \left| j_1^\Theta z_1-j_2^\Theta z_2 \right| \leq N^\epsilon \right\}
 \\
& \ll   \frac  {(\log N)^2}{N^{2\theta} } \sum_{\substack{(1-\epsilon)\log N\leq u\leq U\\ 0\leq q\leq Q}}  e^{-2u}  
   \sum_{\substack{ e^u\leq j_1, j_2<e^{u+1} \\m_i,n_i\asymp e^u/N^{1-\theta}
 \\ e^q\leq n_i-m_i<e^{q+1} }} 
   \mathbf{1}\left\{ \left| j_1^\Theta z_1-j_2^\Theta z_2 \right| \leq  N^\epsilon \right\}
\end{align*}
which is the statement of Proposition~\ref{prop:var}. 
\end{proof}

 \section{Decoupling and using the Robert-Sargos theorem}
 
  We now bound $\mathcal D_{u,q}$ by using an integral representation, a modification of that used by Aistleitner, El-Baz, and Munsch  \cite{AEM}, and then invoke classical results about Dirichlet polynomials. 
  
Recall that $\mathcal D_{u,q}$  is given by \eqref{def of Du}:  
\[
\mathcal D_{u,q}:=\#\left\{e^u\leq j_1,j_2< e^{u+1},\;  z_1,z_2\in \mathcal Z_{u,q}: \quad  |j_1^\Theta z_1-j_2^\Theta z_2|< N^\epsilon \right\}
\]
where $N^{1-\epsilon}<e^u<N^{1+\epsilon}$ and
\[
  \mathcal Z_{u,q}=\left\{z=\frac 1{m^{\Theta-1}}-\frac 1{n^{\Theta-1}}: 
  m,n\asymp \frac{e^u}{N^{1-\theta}}, \quad e^q\leq n-m <e^{q+1} \right\}  
\]
(we consider it as a multi-set, that is the elements are counted with their multiplicities, if any).

Note that $ z\in \mathcal Z_{u,q}$ satisfies
$z\asymp e^qN/e^{\Theta u}$ and $1\leq e^q \leq e^u/N^{1-\theta}$. Further,
$$\# \mathcal Z_{u,q} \asymp \frac{e^u}{N^{1-\theta}}e^q .$$ 

\begin{thm}\label{thm:bound on Duq}
If $N^{1-\epsilon} \ll e^u\ll N^{1+\epsilon} $,  then 
\begin{equation*}
  \mathcal D_{u,q}  \ll N^{1+3\theta+o(1)}
\end{equation*}
\end{thm}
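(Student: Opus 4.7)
My plan is to bound $\mathcal D_{u,q}$ by unfolding the Diophantine inequality via a Fourier integral, decoupling the $j$-variable from the $(m,n)$-variables, and appealing to the Robert--Sargos theorem.

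First, I would pick a Schwartz $\psi\ge\mathbf{1}_{[-1,1]}$ with $\widehat\psi$ compactly supported. Fourier inversion gives $\mathbf{1}\{|x|<N^{\epsilon}\}\le\psi(x/N^{\epsilon})=N^{\epsilon}\int \widehat\psi(N^{\epsilon}\xi)\,e(\xi x)\,d\xi$, from which
\[
\mathcal D_{u,q}\;\ll\;N^{\epsilon}\int\widehat\psi(N^{\epsilon}\xi)\,|T(\xi)|^{2}\,d\xi,\qquad T(\xi)\,:=\!\!\!\sum_{\substack{j\asymp e^{u}\\ m,n\asymp e^{u}/N^{1-\theta}\\ e^{q}\le n-m<e^{q+1}}}\!\!\!e\bigl(\xi\, j^{\Theta}(m^{1-\Theta}-n^{1-\Theta})\bigr).
\]
The window $|\xi|\ll N^{-\epsilon}$ is effectively enforced by the compact support of $\widehat\psi$.

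Second, I would decouple the $j$-variable from $(m,n)$, for instance by combining Cauchy--Schwarz with the factorisation $j^{\Theta}m^{1-\Theta}=(j/m^{\theta})^{\Theta}$, which rewrites the sixfold difference as a four-term sum of $\Theta$-th powers. The Robert--Sargos theorem (with exponent $\gamma=1-\Theta$, arguments at scale $M=e^u/N^{1-\theta}\asymp N^{\theta}$, and tolerance $\Delta\asymp N^{\epsilon}/j^{\Theta}\asymp N^{\epsilon-\Theta}$) then bounds the resulting count
\[
\#\bigl\{(m_{1},n_{1},m_{2},n_{2})\in[M,2M]^{4}\,:\,|m_{1}^{1-\Theta}-n_{1}^{1-\Theta}-m_{2}^{1-\Theta}+n_{2}^{1-\Theta}|<\Delta\bigr\}
\]
by $\ll M^{2+\epsilon}+\Delta\, M^{3+\Theta+\epsilon}\ll N^{2\theta+\epsilon}+N^{3\theta-1+\epsilon}$, where I have used $\theta\Theta=\Theta-1$. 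Weighting these bounds by the appropriate $j$-sum (at most $\asymp N^{2}$ pairs $(j_{1},j_{2})$) and summing dyadically across the ranges should yield the claim $\mathcal D_{u,q}\ll N^{1+3\theta+o(1)}$.

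The main obstacle is the decoupling step. A straightforward Cauchy--Schwarz on the $j$-variable would cost a full factor $\#\{j\}\asymp N$, and combined with the Robert--Sargos bound $N^{2\theta+\epsilon}$ this would give $N^{2+2\theta+\epsilon}$, \emph{worse} than trivial. Avoiding this loss is essential: one must either treat $(j,m,n)$ jointly---using $j^{\Theta}m^{1-\Theta}=(j/m^{\theta})^{\Theta}$ to recast the problem as a Robert--Sargos count for the real numbers $\{j/m^{\theta}\}$, accompanied by a suitable density argument---or exploit the constraint $e^{q}\le n-m<e^{q+1}$ to mitigate the Cauchy--Schwarz loss. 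Balancing the two Robert--Sargos terms (the $M^{2+\epsilon}$ term governing the diagonal $j_{1}=j_{2}$ contribution, and the $\Delta\, M^{3+\Theta+\epsilon}$ term governing the off-diagonal $j_{1}\ne j_{2}$ contribution) so as to produce \emph{exactly} the exponent $1+3\theta$ is the technical crux.
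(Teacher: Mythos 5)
Your outline correctly unfolds $\mathcal D_{u,q}$ via Fourier inversion and correctly prepares the Robert--Sargos input (your $M\asymp N^{\theta}$, tolerance $\Delta$, and resulting count $\ll N^{2\theta+O(\epsilon)}$ agree with the paper's Lemma~\ref{lem: diophantine inequality upper bound}), but the decoupling obstacle you flag at the end is genuinely fatal to the additive route, and none of the workarounds you sketch resolves it. In $T(\xi)=\sum_{j,m,n}e\bigl(\xi\,j^{\Theta}(m^{1-\Theta}-n^{1-\Theta})\bigr)$ the phase does not factor as a function of $j$ times a function of $(m,n)$; a Cauchy--Schwarz in $j$ therefore costs a full factor $e^{u}\asymp N$ and gives $N^{2+2\theta+o(1)}$, which exceeds the target $N^{1+3\theta+o(1)}$ for every $\theta<1$. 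There is no ready-made Robert--Sargos theorem for the real numbers $j/m^{\theta}$, and the dyadic restriction on $n-m$ does not by itself recoup the loss.

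The structural idea you are missing is the paper's switch from the additive transform $e(\xi x)$ to the multiplicative one $x^{2\pi i t}$. Because $(j^{\Theta}z)^{2\pi i t}=j^{2\pi i\Theta t}\cdot z^{2\pi i t}$, the corresponding sum factors \emph{exactly} as a product of two Dirichlet polynomials $D_{u}(\Theta t)\,P_{u,q}(t)$, so there is no Cauchy--Schwarz loss at all. The paper then constructs a Beurling--Selberg type majorant $\Phi$ (Lemma~\ref{lem: Beurling-Selberg function}) to get $\mathcal D_{u,q}\ll T^{-1}\int|D_{u}(\Theta t)|^{2}|P_{u,q}(t)|^{2}\Phi(t/T)\,dt$ with $T\asymp e^{q}N^{1-\epsilon}$, and bounds this twisted second moment by combining Montgomery's pointwise estimates $|D_{u}(t)|\ll e^{u}/(1+|t|)$ for $|t|\le e^{u}$ and $|D_{u}(t)|\ll|t|^{1/2}$ for $e^{u}\le|t|\le e^{2u}$ with a mean-value estimate for $P_{u,q}$; the latter is precisely where Robert--Sargos enters, via $\#\mathcal Z_{u,q,\mathrm{diag}}\ll N^{2\theta+O(\epsilon)}$. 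Once this factorization is in place, the balancing you anticipated as the ``technical crux'' happens automatically: the low-frequency regime $|t|\le e^u$ and the high-frequency regime $e^u\le|t|\le T$ each contribute $\ll N^{1+3\theta+O(\epsilon)}$.
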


Together with Proposition~\ref{prop:var}, this will give the required bound \eqref{variance bound}  on the variance, 
that is prove Theorem~\ref{thm:2nd moment of Roff}.

\subsection{Estimating \ensuremath{\mathcal D_{u,q}} by an integral}
We define (following Regavim \cite{Regavim})  
\[
P_{u,q}(t):=\sum_{z\in \mathcal Z_{u,q}} z^{2\pi it} .
\]
Also let $D_u(t)$ be the Dirichlet polynomial
\[
D_u(t):= \sum_{e^u\leq j <e^{u+1}  } j^{2\pi it}, 
\]
and  set 
\[
 T =  \frac{e^{q}N}{N^\epsilon} .
\]
Note that $T\asymp j^\Theta z/N^\epsilon$ when $j\asymp e^u$ and $z\in \mathcal Z_{u,q}$, and that $T\ll N^{1+\theta}$.

The following lemma produces useful cut-off functions, by using Beurling-Selberg functions, see Appendix~\ref{app:BS function}.
%The next lemma states that pleasant cut-off functions exists 
%(so that we do not need to chase tails terms).
%The following lemma is proved straightforwardly via
%Beurling-Selberg functions, see Appendix~\ref{app:BS function}.
\begin{lem}\label{lem: Beurling-Selberg function}
There is a smooth function $\Phi : \R \rightarrow \R$ satisfying 
\begin{enumerate}
 \item $\Phi $ is supported in $[-1,1]$,   
\item  $\Phi\geq 0$ is non-negative, 

\item \label{ft majorizes indicator}
the Fourier transform
 $\^\Phi\geq 0$ is non-negative,
 \item   $\^\Phi(x)\geq 1$ for $|x|\leq 1$. 
 \end{enumerate}
 \end{lem}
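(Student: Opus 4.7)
The plan is to build $\Phi$ as a suitably scaled self-convolution of a very narrow non-negative bump. Fix a small $\delta \in (0, 1/(4\pi))$ and pick $\psi \in C_c^\infty(\R)$ which is non-negative, even, supported in $[-\delta,\delta]$, and normalized so that $\int \psi(x)\,dx = 1$. Then I would set $\Phi := c \cdot (\psi * \psi)$, where the constant $c > 0$ is to be chosen at the end.

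Properties (1)--(3) follow immediately. Since $\supp(\psi * \psi) \subseteq [-2\delta, 2\delta] \subset [-1,1]$, property (1) holds; property (2) follows because the convolution of non-negative functions is non-negative; and for property (3), the convolution theorem gives $\widehat{\Phi} = c\, \widehat{\psi}^{2}$, and since $\psi$ is real-valued and even, $\widehat{\psi}$ is itself real-valued, hence $\widehat{\psi}^{2} \geq 0$.

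For property (4), one uses $\widehat{\psi}(0) = \int \psi\, dx = 1$ together with the elementary bound
$$\left| \widehat{\psi}(\xi) - 1 \right| \leq \int \psi(x)\, |1 - e(-x\xi)|\, dx \leq 2\pi\delta |\xi|,$$
which comes from $|1 - e^{-2\pi i x\xi}| \leq 2\pi|x\xi|$. With the choice $\delta = 1/(4\pi)$ and $|\xi| \leq 1$ this yields $\widehat{\psi}(\xi) \geq 1/2$, and setting $c = 4$ then gives $\widehat{\Phi}(\xi) = 4\widehat{\psi}(\xi)^2 \geq 1$ throughout $|\xi|\leq 1$.

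The only mild subtlety is arranging $\Phi \geq 0$ and $\widehat{\Phi} \geq 0$ at the same time: the former is forced by taking $\psi \geq 0$, while the latter comes for free from the self-convolution structure, which makes $\widehat{\Phi}$ a square of a real function. I expect no real obstacle beyond this; the general Beurling--Selberg theory referenced in the appendix presumably gives a more refined (extremal) construction, but the four properties as stated are already satisfied by this toy model.
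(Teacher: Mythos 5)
Your proof is correct, and it takes a genuinely different route from the paper's. The paper starts from the Beurling--Selberg majorant: it takes a function $\Psi_+$ supported in $[-1,1]$ whose Fourier transform $\^\Psi_+$ is a non-negative majorant of $\mathbf 1_{[-1,1]}$, then sets $\Phi=\Psi_+^2$. On the physical side, squaring forces $\Phi\geq 0$; on the Fourier side, $\^\Phi=\^\Psi_+\conv\^\Psi_+$ is a convolution of non-negative functions (hence $\geq 0$), and since each factor majorizes $\mathbf 1_{[-1,1]}$, the convolution majorizes $\mathbf 1_{[-1,1]}\conv\mathbf 1_{[-1,1]}\geq\mathbf 1_{[-1,1]}$. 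Your construction is exactly dual: you convolve on the physical side (so $\Phi=c\,\psi\conv\psi\geq 0$ because $\psi\geq 0$) and square on the Fourier side (so $\^\Phi=c\,\^\psi^2\geq 0$ because $\^\psi$ is real), and you get the lower bound on $\^\Phi$ not from an extremal majorant but from the elementary continuity estimate $|\^\psi(\xi)-1|\leq 2\pi\delta|\xi|$. What you gain is a more elementary argument that bypasses Beurling--Selberg theory entirely, and it manifestly produces a genuinely $C^\infty$ function (since $\psi\in C_c^\infty$ implies $\psi\conv\psi\in C_c^\infty$); the paper's $\Phi=\Psi_+^2$ is only continuous, as $\^\Psi_+$ decays merely like $1/x^2$, so the ``smooth'' in the lemma statement is in fact better justified by your construction. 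What the paper's approach buys is a sharper majorant (the Selberg construction is extremal), but that sharpness is not needed for the four stated properties. One cosmetic slip: you first say ``fix $\delta\in(0,1/(4\pi))$'' and then set $\delta=1/(4\pi)$; either works, just keep it consistent.
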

   
  We  first show:
\begin{lem}\label{lem:D in terms of integral} 
We can   bound $\mathcal D_{u,q}$ by the twisted second moment
\begin{equation}\label{eq: intermediate estimate}
\mathcal D_{u,q} \ll \frac 1T \intinf |D_u(\Theta t)|^2 |P_{u,q}(t)|^2 \Phi\Big(\frac tT\Big) dt .
\end{equation}
\end{lem}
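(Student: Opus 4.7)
The plan is to open the squared moduli, interchange the finite sums with the integral, and then appeal to the Beurling--Selberg majorant properties of $\Phi$ to dominate the Diophantine counting function $\mathcal D_{u,q}$. Expanding the squares gives
\[
|D_u(\Theta t)|^2 |P_{u,q}(t)|^2 = \sum_{j_1,j_2}\sum_{z_1,z_2\in \mathcal Z_{u,q}} e(t\beta), \qquad \beta := \log\frac{j_1^\Theta z_1}{j_2^\Theta z_2},
\]
with $j_r$ ranging over $[e^u,e^{u+1})$. Substituting $s=t/T$ and interchanging the finite sums with the integral, the right-hand side of the claimed inequality becomes
\[
\frac{1}{T}\intinf |D_u(\Theta t)|^2|P_{u,q}(t)|^2\Phi(t/T)\,dt = \sum_{j_1,j_2,z_1,z_2}\^\Phi(-T\beta) = \sum_{j_1,j_2,z_1,z_2}\^\Phi(T\beta),
\]
where the last equality uses that $\Phi$ is real-valued, and hence (together with $\^\Phi$) even.

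Next I invoke the two key properties of $\Phi$ from Lemma~\ref{lem: Beurling-Selberg function}. By the non-negativity of $\^\Phi$ (property (3)), every summand above is $\ge 0$, so the full sum dominates the contribution of any subcollection of tuples. I restrict to those $(j_1,j_2,z_1,z_2)$ actually counted by $\mathcal D_{u,q}$, i.e.\ satisfying $|j_1^\Theta z_1 - j_2^\Theta z_2|\le N^\epsilon$. The support constraints $j_r\asymp e^u$ and $z_r\asymp e^q N/e^{\Theta u}$ (the latter following from $m,n\asymp e^u/N^{1-\theta}$, $n-m\asymp e^q$, and $\Theta(1-\theta)=1$) force $j_r^\Theta z_r\asymp e^q N$, whence
\[
|\beta|\;\asymp\;\frac{|j_1^\Theta z_1 - j_2^\Theta z_2|}{e^q N}\;\ll\;\frac{N^\epsilon}{e^q N} = \frac{1}{T}.
\]
Absorbing the implicit constant by a harmless rescaling of $\Phi$ (so that property (4) continues to hold on a slightly wider window), one obtains $\^\Phi(T\beta)\ge 1$ for every tuple counted by $\mathcal D_{u,q}$. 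Retaining only these non-negative contributions gives
\[
\frac{1}{T}\intinf|D_u(\Theta t)|^2|P_{u,q}(t)|^2\Phi(t/T)\,dt \;\ge\; \mathcal D_{u,q},
\]
which is the estimate \eqref{eq: intermediate estimate}.

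The proof carries no serious obstacle; it is the standard Beurling--Selberg majorant trick, whose utility is to trade the Diophantine count $\mathcal D_{u,q}$ for an $L^2$-mean of the product $D_u(\Theta t)P_{u,q}(t)$ of two Dirichlet polynomials, setting up the subsequent application of classical mean-value theorems together with the Robert--Sargos bound on the spacings in $\mathcal Z_{u,q}$. The only technical point requiring attention is the calibration of $T$ so that the majorant window $\{|T\beta|\le 1\}$ exactly catches the Diophantine condition $|j_1^\Theta z_1-j_2^\Theta z_2|\le N^\epsilon$; the implicit constants are absolute, depending only on $\supp h$ and $\supp \rho$.
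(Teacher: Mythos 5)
Your argument is essentially the same as the paper's: expand the two squared Dirichlet polynomials, pass the sums through the integral to recognize $\^\Phi(T\beta)$ term-by-term, discard everything except the tuples with $|j_1^\Theta z_1-j_2^\Theta z_2|\leq N^\epsilon$ using $\^\Phi\geq 0$, and then use the lower bound $\^\Phi\geq 1$ on $[-1,1]$ together with the size estimate $j^\Theta z\asymp e^qN$ and the choice $T=e^qN/N^\epsilon$ to catch each such tuple. Your explicit remark about absorbing the implicit constant from $|\log(1+x)|\asymp |x|$ into a rescaling of $\Phi$ (or into the $\ll$ of the conclusion) is a slightly more careful articulation of a step the paper treats informally, but the substance is identical.
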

\begin{proof}
We expand the integral
\[
\begin{split}
\intinf |D_u(\Theta t)|^2 |P_{u,q}(t)|^2 \Phi\Big(\frac tT\Big) \frac{dt}{T} & =
 \sum_{\substack{e^u \leq  j_1,j_2 < e^{u+1}\\ z_1,z_2\in \mathcal Z_{u,q}}} 
\intinf e^{-2\pi it \log \frac{j_1^\Theta z_1}{j_2^\Theta z_2}}  \Phi\Big(\frac tT\Big) 
\frac{dt}{T}
\\
& =  \sum_{\substack{e^u \leq  j_1,j_2 < e^{u+1}\\ z_1,z_2\in \mathcal Z_{u,q}}} 
\^\Phi\Big(T\log \frac{j_1^\Theta z_1}{j_2^\Theta z_2} \Big) .
\end{split}
\]
Since $\^\Phi\geq 0$, we decrease the right hand side if we drop all terms except those where $|j_1^\Theta z_1-j_2^\Theta z_2|<N^{\epsilon}$. 
For these, we have
\[
\Big| \log \frac{j_1^\Theta z_1}{j_2^\Theta z_2} \Big|=
\Big| \log \left(1 + \frac{j_1^\Theta z_1-j_2^\Theta z_2}{j_2^\Theta z_2} \right)\Big|  
\sim \frac {|j_1^\Theta z_1-j_2^\Theta z_2|} {j_2^\Theta z_2}  
 < \frac {N^{\epsilon}}{e^{\Theta u} \frac{N  e^q }{e^{\Theta u}}} =
\frac 1T
\]
and therefore if $|j_1^\Theta z_1-j_2^\Theta z_2|<N^{\epsilon}$ then 
$|T\log \frac{j_1^\Theta z_1}{j_2^\Theta z_2}|<1$ so that 
\[
 \^\Phi\Big(T\log \frac{j_1^\Theta z_1}{j_2^\Theta z_2} \Big)  \geq 1 .
\]
Therefore
\[
\frac 1T \intinf |D_u(\Theta t)|^2 |P_{u,q}(t)|^2  \Phi\Big(\frac tT\Big) dt \geq 
 \sum_{\substack{e^u \leq  j_1,j_2 < e^{u+1}\\ z_1,z_2\in \mathcal Z_{u,q} \\ |j_1^\Theta z_1-j_2^\Theta z_2|<1}} 1 =: \mathcal D_{u,q} .
\]
\end{proof}

\subsection{Using a theorem of Robert--Sargos}
The diagonal sub-system 
\begin{align*}
\mathcal Z_{u,q, {\mathrm{diag}}} = \left\{z_1,z_2\in \mathcal Z_{u,q}: \, |z_1-z_2|< N^{2\epsilon} e^{- \Theta u} \right\}
\end{align*}
of $\cZ_{u,q}$ plays an important role in the following.
Our current goal is to show that it has essentially only diagonal solutions $z_1 = z_2$.
To this end, the next result of Robert and Sargos \cite[Thm. 2]{RS}
is crucial:
\begin{thm}\label{thm: RS}
Suppose $1 - \Theta \in \mathbb{R}\setminus\{0,1\}$, and $\epsilon>0$. Then
there exists a $C_{\epsilon,\Theta}>0$ so that 
$$ 
\# \{(x_{1},y_{1},x_{2},y_{2})\in\left[1,M\right]^{4}\cap \mathbb{Z}^4:
\vert x_{1}^{1 - \Theta}-y_{1}^{1 - \Theta}
+x_{2}^{1 - \Theta}-y_{2}^{1 - \Theta} \vert 
<\gamma M^{1 - \Theta}
\} 
$$
is at most
$
C_{\epsilon,\Theta}
M^{\epsilon}(M^{2}+\gamma M^{4})
$ for any $\gamma>0$ and any $M\geq1$.
\end{thm}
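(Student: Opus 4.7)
I would recast the count as a smoothed fourth moment of the exponential sum $T(\eta) := \sum_{n=1}^{M} e(\eta n^{1-\Theta})$, and then split the frequency integral into a ``low'' range whose trivial bound matches the $\gamma M^4$ term, and a ``high'' range to be handled by van der Corput's $B$-process followed by an integer-point count on the strictly convex level curves $x_1^{1-\Theta}+x_2^{1-\Theta}=c$. Concretely, set $\delta := \gamma M^{1-\Theta}$ and pick a Beurling--Selberg majorant $\Phi$ as in Lemma~\ref{lem: Beurling-Selberg function} (so $\Phi \geq \mathbf{1}_{[-1,1]}$ and $\^\Phi \geq 0$ is compactly supported in some $[-A,A]$). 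Then the cardinality in the theorem is at most
\[
\sum_{(x_1,x_2,y_1,y_2) \in [1,M]^4 \cap \Z^4} \Phi\!\left(\frac{x_1^{1-\Theta}+x_2^{1-\Theta}-y_1^{1-\Theta}-y_2^{1-\Theta}}{\delta}\right),
\]
and Fourier inversion rewrites this as $\delta \intinf \^\Phi(\delta\eta)\,|T(\eta)|^4\, d\eta$, with effective $\eta$-support $|\eta| \leq A/\delta$.

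\textbf{Frequency split.} For $|\eta| \ll M^{\Theta-1}$ the phase $\eta n^{1-\Theta}$ varies by only $O(1)$ across $n \in [1,M]$, so I would use the trivial $|T(\eta)| \leq M$; this range contributes $\ll \delta \cdot M^{\Theta-1} \cdot M^4 = \gamma M^4$, which accounts for one of the two terms in the claim. For $M^{\Theta-1} \ll |\eta| \ll 1/\delta$, applying van der Corput's $B$-process (exactly as in Proposition~\ref{prop:apply poisson to exp sum a}) dualises $T(\eta)$ into a shorter sum of length $\asymp |\eta|M^{\Theta}$ with phases of the form $c_2 \eta/m^{\Theta-1}$. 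Opening the fourth power then expresses the contribution as, essentially, a weighted count of integer $4$-tuples $(m_1,m_2,n_1,n_2)$ lying close to the level sets $m_1^{1-\Theta}+m_2^{1-\Theta}=n_1^{1-\Theta}+n_2^{1-\Theta}$.

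\textbf{Main obstacle.} The remaining integer-point count is the heart of the matter, and I expect this to be where the real work lies. Diagonal and near-diagonal $4$-tuples (where $\{x_1,x_2\}$ and $\{y_1,y_2\}$ agree up to $O(1)$ perturbations) contribute $O(M^{2+\epsilon})$, matching the remaining term of the claim; one must then show that off-diagonal tuples do not exceed this. For generic $\Theta$ the curves $x_1^{1-\Theta}+x_2^{1-\Theta}=c$ are smooth and strictly convex (the curvature is non-zero because $1-\Theta \neq 0,1$) but \emph{not} algebraic, so Bombieri--Pila is unavailable; one has to appeal to Huxley-type estimates for integer points close to smooth strictly convex arcs, and crucially must apply them uniformly over the continuous one-parameter family of level sets, then aggregate. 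Making this uniform in $M$ and $\gamma$ and absorbing all losses into the $M^\epsilon$ factor is the delicate technical step that drives the whole Robert--Sargos argument.
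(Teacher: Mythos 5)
The paper does not prove this statement; it is quoted verbatim from Robert and Sargos \cite[Thm.~2]{RS} and used as a black box, so there is no internal proof to compare against. What you are proposing is therefore a from-scratch proof of a nontrivial theorem from the literature, and it has a genuine gap.

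Your setup is sensible: passing to the smoothed fourth moment $\delta\intinf \^\Phi(\delta\eta)\,|T(\eta)|^{4}\,d\eta$ with $\delta=\gamma M^{1-\Theta}$, and peeling off the short low-frequency range to produce $\gamma M^{4}$, are both correct moves. (Two small corrections there. First, your justification for the low-frequency range is off: since $1-\Theta<0$, the phase $\eta n^{1-\Theta}$ for $n\in[1,M]$ ranges over an interval of length $\asymp|\eta|$, so it has $O(1)$ variation only for $|\eta|\ll 1$, not $|\eta|\ll M^{\Theta-1}$; what actually saves the range is simply the trivial bound on a short interval, $\delta\cdot M^{\Theta-1}\cdot M^{4}=\gamma M^{4}$. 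Second, the $B$-process sends a monomial phase exponent $\alpha$ to $\alpha/(\alpha-1)$, so the dual sum has phases $\propto m^{(\Theta-1)/\Theta}$, not $m^{1-\Theta}$, and the dual level sets you wrote down have the wrong exponent; moreover $|\phi''(n)|\asymp|\eta|\,n^{-\Theta-1}$ varies polynomially over $n\in[1,M]$, so the $B$-process must be run dyadically in $n$.)

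The real problem is the ``main obstacle'' paragraph, which is not an argument. After the $B$-process and opening the fourth power you land on a lattice-point count of the same general shape you started with, and nothing has yet been gained; the theorem has effectively been restated, not reduced. Your proposed resolution --- apply Huxley/Jarn\'ik-type bounds for integer points near a strictly convex arc to each level set and then sum over the level parameter --- cannot close the gap, even in principle. A single $C^{2}$ strictly convex arc of length $\asymp M$ and curvature $\asymp M^{-1}$ can carry on the order of $M^{2/3}$ lattice points in the relevant tube (Jarn\'ik's bound is sharp for arcs; Huxley's $C^{3}$ improvement is still a power of $M$), so summing over the $\asymp M^{2}$ choices of $(y_{1},y_{2})$ yields at best $M^{2+2/3}$, nowhere near $M^{2+\epsilon}$. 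The entire content of the Robert--Sargos theorem is the averaging mechanism over the one-parameter family of level sets that renders the exceptional curves negligible; your sketch explicitly postpones that step (``this is where the real work lies'', ``the delicate technical step that drives the whole Robert--Sargos argument''). In the context of this paper, that step \emph{is} the theorem, and leaving it open means the attempt does not constitute a proof.
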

Now we are ready to bound $\# \mathcal Z_{u,q, {\mathrm{diag}}}$:
\begin{lem}\label{lem: diophantine inequality upper bound}
We have $ \# \mathcal Z_{u,q, {\mathrm{diag}}} \ll N^{2 \theta + O(\epsilon)}$.
\end{lem}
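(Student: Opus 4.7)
The plan is to recognize $\#\mathcal Z_{u,q,\mathrm{diag}}$ as counting integer solutions to a Diophantine inequality of exactly the type handled by the Robert--Sargos Theorem~\ref{thm: RS}. First I would unpack the definition. Writing $z_i = m_i^{1-\Theta} - n_i^{1-\Theta}$, we see that $\#\mathcal Z_{u,q,\mathrm{diag}}$ counts quadruples of positive integers $(m_1, n_1, m_2, n_2)$, each of size $\asymp M := e^u/N^{1-\theta}$, with $e^q \leq n_i - m_i < e^{q+1}$ for $i=1,2$, satisfying
\[
\bigl| m_1^{1-\Theta} - n_1^{1-\Theta} + n_2^{1-\Theta} - m_2^{1-\Theta} \bigr| < N^{2\epsilon} e^{-\Theta u}.
\]
The gap restriction $e^q \leq n_i - m_i < e^{q+1}$ only cuts down the count, so dropping it and setting $(x_1, y_1, x_2, y_2) := (m_1, n_1, n_2, m_2)$ puts the problem exactly in the shape required by Theorem~\ref{thm: RS}, with exponent $1-\Theta \in \R\setminus\{0,1\}$.

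Next I would calibrate the parameters. Since $N^{1-\epsilon} \ll e^u \ll N^{1+\epsilon}$, one has $M \asymp N^{\theta + O(\epsilon)}$. The identity $(1-\theta)(\Theta - 1) = \theta$ collapses the exponent of $N$ nicely, giving
\[
M^{1-\Theta} = e^{(1-\Theta)u}\, N^{\theta},
\]
so the natural choice of $\gamma$ in Theorem~\ref{thm: RS} is
\[
\gamma := \frac{N^{2\epsilon} e^{-\Theta u}}{M^{1-\Theta}} = N^{2\epsilon-\theta}\, e^{-u} \;\ll\; N^{-1-\theta + 3\epsilon}.
\]
Plugging in, Theorem~\ref{thm: RS} bounds $\#\mathcal Z_{u,q,\mathrm{diag}}$ by
\[
M^{\epsilon}\bigl(M^2 + \gamma M^4\bigr) \ll N^{O(\epsilon)}\bigl(N^{2\theta} + N^{3\theta - 1}\bigr).
\]
Since $\theta \in (0,1)$ forces $3\theta - 1 < 2\theta$, the $M^2$ contribution dominates, yielding the advertised bound $\#\mathcal Z_{u,q,\mathrm{diag}} \ll N^{2\theta + O(\epsilon)}$.

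The main friction is purely bookkeeping: pairing $(m_i, n_i)$ with $(x_i, y_i)$ so that signs line up with the $|x_1^{1-\Theta} - y_1^{1-\Theta} + x_2^{1-\Theta} - y_2^{1-\Theta}|$ shape required by Robert--Sargos, and verifying via $(1-\theta)(\Theta-1) = \theta$ that the threshold $N^{2\epsilon}e^{-\Theta u}$ really is a clean multiple of $M^{1-\Theta}$. Once those are in place, the fact that $M^2$ beats $\gamma M^4$ for every $\theta \in (0,1)$ comes for free, and no further Diophantine input is needed.
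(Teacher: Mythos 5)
Your proof is correct and follows essentially the same route as the paper: identify $\#\mathcal Z_{u,q,\mathrm{diag}}$ with a count of integer quadruples of size $\asymp M = e^u/N^{1-\theta}$, drop the gap constraint $e^q \le n_i - m_i < e^{q+1}$ (which only enlarges the count), and apply the Robert--Sargos Theorem~\ref{thm: RS} with the same $\gamma = N^{2\epsilon}e^{-\Theta u}M^{\Theta-1} = N^{2\epsilon-\theta}e^{-u}$. The only cosmetic difference is that you verify $\gamma M^4 \ll M^2$ by comparing $3\theta-1$ with $2\theta$, whereas the paper checks the equivalent condition $\gamma M^2 \ll 1$; your explicit sign-alignment via $(x_1,y_1,x_2,y_2) = (m_1,n_1,n_2,m_2)$ is also a small bookkeeping point the paper leaves implicit.
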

\begin{proof}
First of all note that $\mathcal Z_{u,q, {\mathrm{diag}}}$ equals
$$
\{ m_1,n_1,m_2,n_2 \ll M:  | m_1^{1-\Theta}-n_1^{1-\Theta}
- m_2^{1- \Theta}+n_2^{1-\Theta} |<  \gamma M^{(1-\Theta)} 
\}
$$
where
\[
M:=e^u/N^{1-\theta}, \quad \gamma := N^{2\epsilon}e^{-\Theta u} M^{\Theta-1} .
%\gamma := N^{2\epsilon} e^{-(1-\Theta)Q - \Theta u}. 
\]
%Recall $e^Q = O(N^{\theta+O(\epsilon)})$, and observe that
%$\gamma \ll N^{O(\epsilon)} N^{-\theta (1-\Theta)} N^{- \Theta}$.
%The exponent of $N$ on the right, aside from the $O(\epsilon)$ term, equals 
%$$ 
%- \theta + (\theta-1) \Theta = - \theta - 1.
%$$
We compute that $\gamma=  N^{2\epsilon}/(e^{u}N^{\theta})$. 
Therefore,
$$
\gamma  M^2 \ll 
N^{O(\epsilon)}
N^{2\theta} N^{-(1+\theta)}
\ll 1.
$$
Applying Theorem \ref{thm: RS} completes the proof.
\end{proof}
Now we complete the proof Theorem \ref{thm:bound on Duq} by proving:
  \begin{lem}\label{lem:P reproduces Zdiag}
The right hand side of \eqref{eq: intermediate estimate} is 
$O(e^u N^{3 \theta + O(\epsilon)})$.
  \end{lem}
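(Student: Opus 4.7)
The plan is to expand the integral into a weighted count of near-coincidences of the values $j^\Theta z$, and then to control this count by combining the Robert--Sargos bound on $\mathcal Z_{u,q,\mathrm{diag}}$ from Lemma~\ref{lem: diophantine inequality upper bound} with a careful analysis of the remaining contributions.

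First I would expand $|D_u(\Theta t)|^2 |P_{u,q}(t)|^2$ into a quadruple sum of exponentials and integrate against $\Phi(t/T)$. Using evenness of $\hat\Phi$, this yields
\[
\frac 1T \int |D_u(\Theta t)|^2 |P_{u,q}(t)|^2 \Phi(t/T)\,dt = \sum_{(j_1,z_1),(j_2,z_2)} \hat\Phi\Big(T\log\frac{j_1^\Theta z_1}{j_2^\Theta z_2}\Big).
\]
Since $T \asymp j^\Theta z/N^{\epsilon}$ on the relevant ranges and $\hat\Phi$ is Schwartz, the argument of $\hat\Phi$ is comparable to $N^{-\epsilon}|j_1^\Theta z_1 - j_2^\Theta z_2|$, so that a dyadic decomposition of the sum into levels $|j_1^\Theta z_1 - j_2^\Theta z_2|\asymp 2^k N^{\epsilon}$ picks up a weight $\ll 2^{-kK}$ for any $K$, concentrating the mass at $k=O(1)$.

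I would then split the resulting sum based on the relationship of $(z_1,z_2)$. When $z_1=z_2=z$, the constraint reads $|j_1^\Theta - j_2^\Theta|\ll N^{O(\epsilon)}/z$; using $|j_1^\Theta-j_2^\Theta|\gtrsim e^{u(\Theta-1)}|j_1-j_2|$ together with $z\asymp e^q/N^{\Theta-1}$ forces $|j_1-j_2|<1$, so only $j_1=j_2$ contributes, giving at most $e^u\cdot|\mathcal Z_{u,q}|\ll N^{1+2\theta+O(\epsilon)}$. When $z_1\neq z_2$ but $(z_1,z_2)\in\mathcal Z_{u,q,\mathrm{diag}}$, Lemma~\ref{lem: diophantine inequality upper bound} gives at most $N^{2\theta+O(\epsilon)}$ such pairs, and the same analysis as before shows that only $j_1=j_2$ contributes, again yielding $\ll N^{1+2\theta+O(\epsilon)}$. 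Both cases are well within the target $e^u N^{3\theta+O(\epsilon)}$.

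The hard part will be the genuinely off-diagonal regime $(z_1,z_2)\notin\mathcal Z_{u,q,\mathrm{diag}}$, where $|z_1-z_2|>N^{2\epsilon}/e^{\Theta u}$ forces $j_1\neq j_2$. Here I would substitute $z_r=m_r^{1-\Theta}-n_r^{1-\Theta}$ and rewrite the Diophantine inequality in the six integer variables $(j_1,j_2,m_1,n_1,m_2,n_2)$. For each pair $(j_1,j_2)$ with $j_1\neq j_2$, the number of $(z_1,z_2)$ satisfying the constraint can be controlled by pairing the near-coincidence $|(j_1/j_2)^\Theta z_1 - z_2|\ll N^{O(\epsilon)-\Theta}$ with the structural count that $\mathcal Z_{u,q}$, parameterized through $(m,d)$ with $d=n-m\asymp e^q$, contributes at most $\asymp e^q$ elements to any interval of the relevant length. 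Summed over the $\asymp e^{2u}$ choices of $(j_1,j_2)$ and over $z_1\in\mathcal Z_{u,q}$, together with the dyadic weights from $\hat\Phi$, this should give a contribution $\ll e^u N^{3\theta+O(\epsilon)}$. Adding the three cases yields the claimed bound.
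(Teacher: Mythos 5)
Your opening move — expanding the integral back into $\sum_{(j_1,z_1),(j_2,z_2)}\widehat\Phi\bigl(T\log\frac{j_1^\Theta z_1}{j_2^\Theta z_2}\bigr)$ — essentially reverses the step that produced \eqref{eq: intermediate estimate} in Lemma~\ref{lem:D in terms of integral}, and reduces the problem to a direct count of $\mathcal D_{u,q}$. That count is exactly what the integral representation was introduced to avoid, and the paper's proof takes a genuinely different, ``decoupled'' route: it splits $\int_0^T$ into a low range $t\in[0,e^u]$ and a high range $t\in[e^u,T]$, applies pointwise estimates $|D_u(t)|\ll e^u/\sqrt{1+t^2}$ (low) and $|D_u(t)|\ll t^{1/2}$ (high) from Montgomery, and pairs each with a separate bound on $\int|P_{u,q}(t)|^2\Phi(t/T)\,dt$, the high-range one being where $\#\mathcal Z_{u,q,\mathrm{diag}}\ll N^{2\theta+O(\epsilon)}$ (Lemma~\ref{lem: diophantine inequality upper bound}) enters. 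The whole gain comes from square-root cancellation in the Dirichlet polynomial $D_u$ over the $j$-variable, which cannot be seen at the level of a pure coincidence count.

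Concretely, your case (c) bound does not close. Fixing $(j_1,j_2,z_1)$ and counting $z_2$ with $|z_2-(j_1/j_2)^\Theta z_1|\ll N^{O(\epsilon)-\Theta}$, your argument correctly gives $\ll e^q$ (one $m_2$ for each of the $\asymp e^q$ shifts $d_2=n_2-m_2$), but this is a rigid worst-case ``$+1$'' bound; most such intervals in fact contain no point of $\mathcal Z_{u,q}$. Summing this over all $(j_1,j_2,z_1)$ gives
\[
e^{2u}\cdot\#\mathcal Z_{u,q}\cdot e^q \asymp N^{2}\cdot e^q N^{\theta+O(\epsilon)}\cdot e^q = N^{2+\theta+O(\epsilon)}e^{2q},
\]
and with $e^q$ as large as $N^\theta$ this is $N^{2+3\theta+O(\epsilon)}$, a full factor of $N$ above the target $e^uN^{3\theta+O(\epsilon)}\asymp N^{1+3\theta+O(\epsilon)}$. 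Organizing the count the other way (fix $(z_1,z_2)$, count $(j_1,j_2)$) is no better. The missing $N^{1-\theta}$-type saving is precisely what the estimate $|D_u(t)|^2\ll t$ on $[e^u,T]$ buys, and it has no visible analogue in a direct enumeration. As a smaller point, in cases (a) and (b) the deduction that $j_1=j_2$ is forced is also not correct when $q$ is small: the constraint only yields $|j_1-j_2|\ll N^{O(\epsilon)}/e^q$, which need not be $<1$; this is absorbed by the $N^{O(\epsilon)}$ slack and is not fatal, but it is another indication that the reduction to a plain count is lossy. To fix the proof you would have to abandon the re-expansion and follow the paper's dyadic-in-$t$ split, bounding $D_u$ pointwise and only using Robert--Sargos through $\int|P_{u,q}|^2\Phi(t/T)\,dt\ll T\,\#\mathcal Z_{u,q,\mathrm{diag}}\ll TN^{2\theta+O(\epsilon)}$.
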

  
\begin{proof}
As $ \Phi(t/T)=0$ if $|t|>T$, we first note that
$$ 
\intinf |D_u(\Theta t)|^2 |P_{u,q}(t)|^2  \Phi \Big(\frac tT\Big) dt 
= 2\int_{0}^{T} |D_u(\Theta t)|^2 |P_{u,q}(t)|^2  \Phi \Big(\frac tT\Big) dt.
$$
Denote by $I_{\mathrm{low}}$  (resp. by $I_{\mathrm{high}}$) 
the contribution of $t\in [0,e^u]$
(resp. of $t\in [e^u,T]$) to the right hand side.
To bound $I_{\mathrm{low}}$, we use the trivial bound
\[
|P_{u,q}(t)|\leq P_{u,q}(0)=\#\mathcal Z_{u,q} 
\ll N^{\theta + O(\epsilon)} e^q
\]
combined with the bound (see \cite[Eq. (34) on page 141]{Montgomery})  
\[
|D_u(t)|\ll \frac{e^u}{\sqrt{1+t^2}}
\quad \mathrm{for} \,t \in [0, e^u].
\]
Thus,
\[
 I_{\mathrm{low}}  \ll \int_{0}^{e^{u}}
\frac{e^{2u}}{1+t^2} N^{4 \theta + O(\epsilon)} dt 
\ll 
e^{2u} N^{2\theta + O(\epsilon)} e^{2q}.
\]
To estimate $I_{\mathrm{high}}$,
we use, see \cite[Eq. (34) on page 141]{Montgomery}, that
\[
|D_u(t)|\ll t^{1/2}, \quad \mathrm{for} \,\, t \in [e^u, e^{2u}];
\] 
by combining this with Lemma \ref{lem:P reproduces Zdiag},
we infer that
\[
I_{\mathrm{ high}} \ll \int_{e^{u}} ^{T}
|t^{1/2}|^2 |P_{u,q}(t)|^2 \Phi \Big(\frac tT\Big) dt
\ll T N^{2 \theta + O(\epsilon)} .
\]
All in all, upon bearing $T \gg e^q N^{1 + O(\epsilon)}$ in mind, we conclude
\[
\frac 1T \intinf |D_u(\Theta t)|^2 |P_{u,q}(t)|^2  \Phi \Big(\frac tT\Big) dt \ll 
\frac{I_{\mathrm{low}}+ I_{\mathrm{ high}}}{T}
\ll  N^{ O(\epsilon)}(e^u N^{2\theta} e^q
+ N^{2 \theta}).
\]
Using $e^q = O(N^{\theta+O(\epsilon)})$ completes the proof.
\end{proof}

\appendix
\section{  Construction of \ensuremath{\Phi}}\label{app:BS function}
Here we prove the existence of the well-behaved cut-off function $\Phi$.
\begin{proof}[Proof of Lemma \ref{lem: Beurling-Selberg function}]
 
 We start with a function $\Psi_+$, supported in $[-1,1]$, so that $\^\Psi_+\geq \mathbf 1_{[-1,1]}$ is a majorant  
   for the indicator function of the interval $[-1,1]$, meaning that $\^\Psi_+\geq 0$, and $\^\Psi_+(x)\geq 1$ for $|x|\leq 1$. Such  functions were constructed by Beurling and Selberg, cf \cite{Vaaler}, starting from Beurling's function
   \[
   B(x) =  \left( \frac{\sin \pi x}{\pi }\right)^2 \left( \frac 2x +\sum_{n=0}^\infty \frac 1{(x-n)^2} 
   -\sum_{n=1}^\infty \frac 1{(x+n)^2} \right)
   \] 
   which is a smooth majorant for the sign function, with Fourier transform $\^B$ supported on $[-1,1]$, and taking 
   \[
   \^\Psi_+(x) = \frac 12 \left( B( 1-x)+B(1+x) \right) ,
   \]
see Figure~\ref{fig:selbergmajorant}.      By construction, both  $\Psi_+$ and $\^\Psi_+$ are even and  real valued. 
       \begin{figure}[!htb]
\begin{center}
  \includegraphics[width=80mm]
{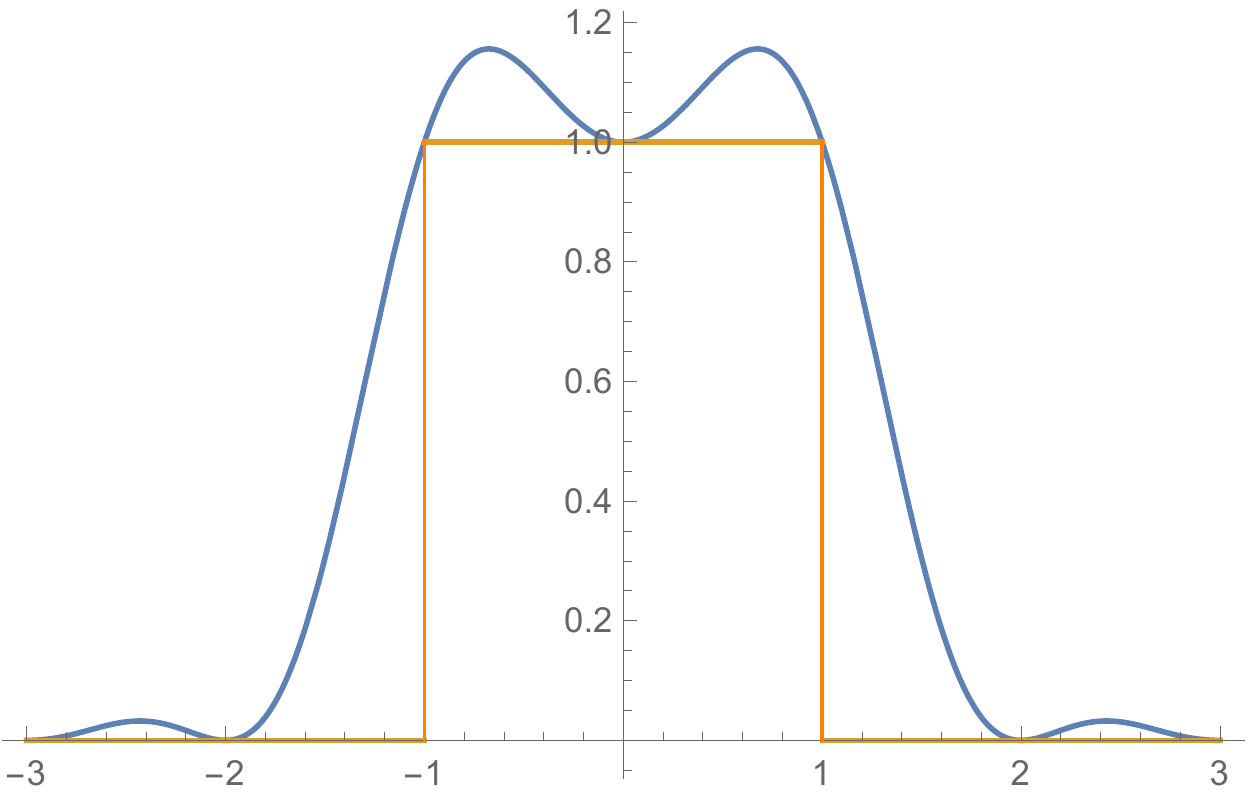}
 \caption{The Beurling-Selberg majorant \ensuremath{\^\Psi_+}.  }
 \label{fig:selbergmajorant}
\end{center}
\end{figure}

   Now take $\Phi=|\Psi_+|^2=\Psi_+^2$, which is \underline{positive} and has the same support as $\Psi_+$. 
   The Fourier transform $\^\Phi$ is the convolution
   \[
   \^\Phi(x) =\^\Psi_+ \conv \^\Psi_+ (x) 
   = \intinf  \^\Psi_+(y)\^\Psi_+(x-y)dy  .
   \]
   Since $\^\Psi_+ \geq \mathbf 1_{[-1,1]} \geq 0$, we have
$    \^\Phi(x) =\^\Psi_+ \conv \^\Psi_+ (x) 
     \geq  \mathbf 1_{[-1,1]}\conv \mathbf 1_{[-1,1]}(x).
$
But 
$  \mathbf 1_{[-1,1]}\conv \mathbf 1_{[-1,1]}(x) =  \max \left(2-|x|,0 \right)$
is a tent function, and in particular also majorizes 
the indicator function $ \mathbf 1_{[-1,1]}$ so that
  \[
  \^\Phi\geq  \max \left(2-|x|,0 \right) \geq \mathbf 1_{[-1,1]},
  \]
  as required.   
  \end{proof}


\begin{thebibliography}{1}


\bibitem{AEM}
Aistleitner, Christoph; El-Baz, Daniel; Munsch, Marc.   
A pair correlation problem, and counting lattice points with the zeta function.
to appear in J. Geom. Funct. Analysis, preprint at \url{https://arxiv.org/abs/2009.08184}.

\bibitem{EMV} 
El-Baz, Daniel; Marklof, Jens; Vinogradov, Ilya. 
The two-point correlation function of the fractional parts of $\sqrt{n}$ is Poisson.
Proc. Amer. Math. Soc. 143 (7), 2815--2828.

 \bibitem{EM}
Elkies, Noam; McMullen, Curtis.  
Gaps in $\sqrt{n}$ mod 1 and ergodic theory.  Duke Math. J. 123
(2004), no. 1, 95--139.

\bibitem{IK}
Iwaniec, Henryk; Kowalski, Emmanuel.
Analytic number theory.
American Mathematical Society Colloquium Publications, 53. American Mathematical Society, Providence, RI, 2004


\bibitem{ILS}
Iwaniec, Henryk; Luo, Wenzhi; Sarnak, Peter.   
Low lying zeros of families of L-functions.
Inst. Hautes \'{E}tudes Sci. Publ. Math.  No. 91 (2000), 55--131 (2001).

\bibitem{KN}
Kuipers, Lauwerens; Niederreiter, Harald. Uniform distribution of sequences. Pure and Applied Mathematics. Wiley-Interscience [John Wiley \& Sons], New York-London-Sydney, 1974.
 
\bibitem{LT}
Lutsko, Christopher; Sourmelidis, Athanasios; Technau, Niclas. 
 Pair Correlation of the Fractional Parts of $\alpha n^\theta$. 	arXiv:2106.09800 [math.NT] 
 
\bibitem{Marklof}
 Marklof, Jens. Pair correlation densities of inhomogeneous quadratic forms. Ann. of Math. (2) 158 (2003), no. 2, 419--471. 
 
\bibitem{Montgomery}
  Montgomery, Hugh L. Ten lectures on the interface between analytic number theory and harmonic analysis. CBMS Regional Conference Series in Mathematics, 84. 
  American Mathematical Society, Providence, RI, 1994.
  
\bibitem{Regavim}
Regavim, Shvo. Minimal Gaps and Additive Energy in real-valued sequences. 	
arXiv:2106.04261 [math.NT]

\bibitem{RS}
Robert, Olivier; Patrick Sargos. 
Three-dimensional exponential sums with monomials.
J. reine angew. Math. 591 (2006): 1-20.

\bibitem{ZRWhatisQC}
Rudnick, Ze\'ev. What is Quantum Chaos, Notices of the AMS, 55 no. 1 (2008), 32--34.

\bibitem{RudSarzeta}
Rudnick, Ze\'ev; Sarnak, Peter.
Zeros of principal L-functions and random matrix theory. 
A celebration of John F. Nash, Jr. Duke Math. J. 81 (1996), no. 2, 269--322.

\bibitem{RudSarpair}
Rudnick, Ze\'ev; Peter Sarnak. 
The pair correlation function of fractional parts of polynomials.
Communications in Mathematical Physics 194.1 (1998): 61--70.

\bibitem{RT}
Rudnick, Ze\'ev; Technau, Niclas.
The metric theory of the pair correlation function of real-valued lacunary sequences.
 Illinois J. Math. No. 64 (2020): 583--594. 

 \bibitem{Vaaler}
 Vaaler, Jeffrey D. Some extremal functions in Fourier analysis. Bull. Amer. Math. Soc. (N.S.) 12 (1985), no. 2, 183--216.
 
\end{thebibliography}
\end{document}